\newtheorem{theorem}{Theorem}[section]
\newtheorem{corollary}[theorem]{Corollary}
\newtheorem{prop}[theorem]{Proposition}
\newtheorem{lemma}[theorem]{Lemma}
\theoremstyle{remark}
\theoremstyle{definition}
\numberwithin{equation}{section}
\numberwithin{theorem}{section}
\begin{document}

%%%%%%%%%%%%%%%%%%%%%%%%%
% Subject classification 
%%%%%%%%%%%%%%%%%%%%%%%%%

% Provide an AMS subject classification with one or two primary classification 
% numbers and, optionally, one or more secondary classification numbers. 
% Use the following format:  "Primary 42B25. Secondary 42B60, 20E26"

\subjclass{Primary 26D15, 41A55, 65D30. Secondary 26A39, 46F10}
\keywords{numerical integration, quadrature, corrected trapezoidal rule,
Lebesgue space, Henstock--Kurzweil
integral, Alexiewicz norm, continuous primitive integral}

\date{Preprint May 16, 2012.  To appear in {\it Journal of Mathematical
Inequalities}}
%%%%%%%%%
% Title
%%%%%%%%%

% Title, in lower case, with no explicit linebreaks (\\).  If the title
% is too long to be used as a running head, add a short version of the
% title in brackets, as in \title[shorttitle]{fulltitle}.

\title{Optimal error estimates for corrected trapezoidal rules}

%%%%%%%%%%%%%%%%%%%%%%%%%%%%%%
% Author names and addresses 
%%%%%%%%%%%%%%%%%%%%%%%%%%%%%%

% Provide one separate \author{...} \address{...} \email{....} entry for each
% author, i.e., do not combine multiple authors.  Separate address lines by double
% slashes.  Do not attach footnotes to author  names. (For acknowledgements use
% the "\thanks" construct below.)
%
\author{Erik Talvila}
\address{Department of Mathematics \& Statistics\\
University of the Fraser Valley\\
Abbotsford, BC Canada V2S 7M8}
\email{Erik.Talvila@ufv.ca}

\author{Matthew Wiersma}
\address{Department of Pure Mathematics\\
University of Waterloo\\
Waterloo, ON Canada N2L 3G1}
\email{mwiersma@uwaterloo.ca}
\thanks{The first author was supported by a Discovery Grant,
the second author was supported by an Undergraduate Student Research Award 
held at University of the Fraser Valley; 
both from the
Natural Sciences and Engineering Research Council of Canada.
This paper was written while the
first author was on leave and visiting the Department of Mathematical and 
Statistical
Sciences, University of Alberta.}

%%%%%%%%%%%%%%%%%%%%
% Acknowledgements
%%%%%%%%%%%%%%%%%%%

% Use \thanks for acknowledgements as footnotes to the title page.  
% (Note that footnotes inside \author or \title macros are not
% allowed.)
%
% In case of multiple author papers, phrase the acknowledgement to 
% say "The first author was supported by ...  The second author was
% supported by ..."

%%%%%%%%%%%%%
% Abstract 
%%%%%%%%%%%%%
%
% Abstracts should not contain macros (so that they can be processed independently
% of the paper.) Avoid displayed math and references in the abstract.

\begin{abstract}
Corrected trapezoidal rules are proved for $\int_a^b f(x)\,dx$ under the
assumption that $f''\in L^p([a,b])$ for some $1\leq p\leq\infty$.  Such
quadrature rules involve the trapezoidal rule modified by the addition
of a term $k[f'(a)-f'(b)]$.  The coefficient  $k$ in the
quadrature formula is found that minimizes the error estimates.  It is shown
that when $f'$ is merely assumed to be continuous then the optimal rule
is the trapezoidal rule itself.  In this case error estimates are in
terms of the Alexiewicz norm.  This includes the case when $f''$ is
integrable in the Henstock--Kurzweil sense or as a distribution.  All
error estimates are shown to be sharp for the given assumptions on $f''$.
It is shown how to make these formulas exact for all cubic polynomials $f$. 
Composite formulas are computed for uniform partitions.
\end{abstract}

\maketitle

% New definition of square root:
% it renames \sqrt as \oldsqrt
\let\oldsqrt\sqrt
% it defines the new \sqrt in terms of the old one
\def\sqrt{\mathpalette\DHLhksqrt}
\def\DHLhksqrt#1#2{%
\setbox0=\hbox{$#1\oldsqrt{#2\,}$}\dimen0=\ht0
\advance\dimen0-0.2\ht0
\setbox2=\hbox{\vrule height\ht0 depth -\dimen0}%
{\box0\lower0.4pt\box2}}

\newcommand{\alexc}{{\mathcal A}_c([a,b])}
\newcommand{\alexr}{{\mathcal A}_r([a,b])}
\newcommand{\balexc}{{\mathcal B}_c([a,b])}
\newcommand{\balexr}{{\mathcal B}_r([a,b])}
\newcommand{\fn}{\!:\!}
\newcommand{\bv}{{\mathcal BV}}
\newcommand{\be}{\begin{equation}}
\newcommand{\ee}{\end{equation}}
\providecommand{\abs}[1]{\lvert#1\rvert}
\providecommand{\norm}[1]{\lVert#1\rVert}
\newcommand{\intab}{\int_a^b}
\newcommand{\polym}{{\mathcal P}_m}
\newcommand{\poly}[1]{{\mathcal P}_{#1}}
\newcommand{\R}{{\mathbb R}}
\newcommand{\N}{{\mathbb N}}
\section{Introduction}
This paper is concerned with numerical integration schemes
for $\intab f(x)\,dx$ where it is assumed $f'$ is absolutely
continuous such that $f''\in L^p([a,b])$ for some $1\leq p\leq\infty$,
or that $f''$ is integrable in the Henstock--Kurzweil sense, or that
$f'$ is continuous so that $f''$ exists as a distribution and is
integrable using a distributional integral.
Integration by parts shows that
\begin{equation}
\intab f(x)\,dx=\frac{1}{2}\left[-f(a)\phi'(a)+f(b)\phi'(b)+
f'(a)\phi(a)-f'(b)\phi(b)\right]
+E(f),\label{basic}
\end{equation}
where $E(f)=(1/2)\intab f''(x)\phi(x)\,dx$ and $\phi$ is a 
monic quadratic
polynomial.  Observe that taking $\phi(x)=(x-a)(x-b)$ gives the
usual trapezoidal rule
$\intab f(x)\,dx=(b-a)\left[f(a)+f(b)\right]/2
+E^{T}(f)$,
where $E^{T}(f)=(1/2)\intab f''(x)(x-a)(x-b)\,dx$.
The H\"older inequality then
gives $\abs{E^T(f)}\leq \norm{f''}_p\norm{\phi}_q/2$, where $q$ is
the conjugate exponent of $p$.  Hence, 
\begin{equation}
\abs{E^T(f)}  \leq  \left\{
\begin{array}{ll}
\norm{f''}_1(b-a)^{2}/8, & p=1\\
{[B(q+1,q+1)]}^{1/q}\norm{f''}_p(b-a)^{2+1/q}/2, & 1<p<\infty\\
\norm{f''}_\infty(b-a)^{3}/12, & p=\infty.
\end{array}
\right.\label{trapp}
\end{equation}
Here, $B(x,y)=\Gamma(x)\Gamma(y)/\Gamma(x+y)$ is the beta function.
See
\cite[Theorem~3.22]{ceronedragomirtrap}.  (This corrects a
typographical error in \cite{ceronedragomirtrap}.)

We find $\phi$ that minimizes the error in \eqref{basic}.  This
leads to a quadrature rule that includes values of $f$ and $f'$ at
the endpoints $a$ and $b$.
The error-minimizing polynomial produces the
classical trapezoidal rule, modified by the addition of first
derivative terms.  In the literature this is a called a 
corrected trapezoidal rule.
This
includes solving the problem of choosing $k\in\R$ to minimize all
quadrature rules of the form $(b-a)\left[f(a)+f(b)\right]/2 +k[f'(a)-
f'(b)]$.
The error terms are as in \eqref{trapp} but with
the coefficient of $\norm{f''}_p(b-a)^{2+1/q}$ minimized.
In particular, the coefficients are strictly less than
in \eqref{trapp}.
We prove our results are the best possible given the
assumption $f''\in L^p([a,b])$ (Theorem~\ref{theoremminimize} and
Corollaries~\ref{corollary1}-\ref{corollarycomposite}).
The composite formula (Corollary~\ref{corollarycomposite}) provides an
improved error estimate over the trapezoidal rule.  Since the $f'$ terms 
telescope, the correction terms only require computation of $f'$ at
endpoints $a$ and $b$ rather than at interior nodes.
Compared to the usual trapezoidal rule the extra computing time to implement our
corrected rule is then negligible for large $n$.

Finding the polynomial $\phi$ that minimizes the error in
\eqref{basic} involves solving a transcendental equation for
a parameter in the polynomial.  This transcendental equation
is written in various ways
in Section~\ref{sectionbeta}.  When $q$ is an integer this
reduces to a polynomial equation for the parameter.  We are
able to solve for the exact value of the parameter when
$p=1,2,4/3, \infty$.  See Corollaries~\ref{corollary1}-\ref{corollaryinfinity}
and \eqref{3/2}. Even without knowing the parameter exactly,
Theorem~\ref{theoremapproxbeta} gives
a corrected trapezoidal rule with error estimate smaller
than in \eqref{trapp}.

In Section~\ref{sectionhk} we reduce the assumption on $f$ to
$f\in C^1([a,b])$ and then $f''$ exists as a distribution
and is integrable using the continuous primitive integral
(Corollary~\ref{corollaryalexc}).  This includes the case when
$f''$ is integrable in the
Henstock--Kurzweil sense (Theorem~\ref{theoremhk}).
The error estimate is then in terms of the Alexiewicz norm of $f''$.
In this case, the optimum form of \eqref{basic} is the trapezoidal
rule itself.

In Section~\ref{sectionexact} we compute $\phi$ so that \eqref{basic} 
is exact for all cubic polynomials $f$.  The required $\phi$ is the
same as the one that minimizes the error in the case when $f''\in L^2([a,b])$.

Several authors have considered corrected trapezoidal rules under the
assumption that the derivatives of $f$ are in various function spaces.  
See \cite{ceronedragomirtrap} ($L^p$), \cite{davisrabinowitz} ($L^\infty$),
\cite{dedicmaticpecaric} (Lipschitz, continuous and bounded variation,
$L^p$), \cite{dingyewang} ($L^p$, Henstock--Kurzweil integrable), 
\cite{liu} (continuous and bounded variation) and
\cite{pecaricujevic} ($L^p$).

\section{Main theorem}
The error in \eqref{basic} is minimized over all monic polynomials
$\phi$.  Results in Lemma~\ref{lemma} show that
a unique error-minimizing polynomial exists and is of the form
$\phi(x)=(x-c)^2-\alpha^2$ such that $c$ is the midpoint of
$[a,b]$ and $\phi$ has two real roots
in $[a,b]$.  Note that \eqref{basic} becomes a corrected
trapezoidal rule precisely when $\phi'(b)=-\phi'(a)=b-a$ and this
relation holds for $\phi(x)=(x-c)^2-\alpha^2$.  For a uniform partition
the composite rule obtained from \eqref{basic} will in general have
$f'$ evaluated at all points at which $f$ is evaluated.  However,
when $\phi'(a)=\phi'(b)$ the sum of $f'$ terms telescopes, leaving
only $f'(a)$ and $f'(b)$ (Corollary~\ref{corollarycomposite}).  This
is the case with all the error-minimizing rules we present.

We are able to compute exact values for $\alpha$
when $p=1,2,4/3,\infty$.  In other cases $\alpha$ is given
by the transcendental equation \eqref{beta}. When $q$ is an integer this
reduces to finding the largest real root of a polynomial of
degree $2q-1$.
\begin{theorem}\label{theoremminimize}
Let $c$ be the midpoint of $[a,b]$.
Let $f\fn[a,b]\to\R$ such that $f'$ is absolutely
continuous and $f''\in L^p([a,b])$ for some $1< p<\infty$.
Let $1/p+1/q=1$.
Amongst all monic quadratic polynomials used to generate \eqref{basic},
taking $\phi(x)=(x-c)^2-\alpha_q^2$ gives the unique minimum for the 
error $\abs{E(f)}$.  The
constant $\beta_q>1$ is the unique solution of the equation
\begin{equation}
\int_1^{\beta_q}
(x^2-1)^{q-1}\,dx =\frac{1}{2}B(q,1/2) = 2^{2q-2}B(q,q)\label{betaeqn}
\end{equation}
and $\alpha_q\beta_q=(b-a)/2$.
This gives the
quadrature formula
\begin{equation}
\intab f(x)\,dx=\frac{b-a}{2}\left[f(a)+f(b)\right] +
\frac{(b-a)^2}{8}(1-\beta_q^{-2})
[f'(a) -f'(b)]
+E(f),\label{optimal}
\end{equation}
where
\begin{equation}
\abs{E(f)}  \leq 
\frac{\norm{f''}_p(b-a)^{2+1/q}(1-\beta_q^{-2})}{2^{3+1/q}(q+1/2)^{1/q}}.
\label{optimalerror}
\end{equation}
The coefficient of $\norm{f''}_p$ in the error bound is the best 
possible.
\end{theorem}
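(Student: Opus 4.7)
The plan is to minimize $\|\phi\|_q$ over monic quadratics and then apply H\"older's inequality, since $|E(f)|\le \tfrac12\|f''\|_p\|\phi\|_q$. By Lemma~\ref{lemma} the minimizer is of the form $\phi(x)=(x-c)^2-\alpha^2$ with $\alpha\in(0,h)$, $h:=(b-a)/2$, so the task reduces to minimizing
\[
J(\alpha):=\int_{-h}^{h}|t^2-\alpha^2|^q\,dt
\]
on $(0,h)$.

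Differentiating $J$ (the boundary contributions at $t=\pm\alpha$ vanish because $q>1$) gives
\[
J'(\alpha)=4q\alpha\left[\int_0^\alpha(\alpha^2-t^2)^{q-1}\,dt-\int_\alpha^h(t^2-\alpha^2)^{q-1}\,dt\right].
\]
The substitution $t=\alpha u$ in each integral, together with the standard evaluation $\int_0^1(1-u^2)^{q-1}\,du=\tfrac12B(q,1/2)$, turns the critical point equation into exactly \eqref{betaeqn} with $\beta=h/\alpha$. The first bracketed integral is strictly increasing in $\alpha$ and the second strictly decreasing, so $J'$ changes sign exactly once, yielding a unique minimizer $\alpha_q=h/\beta_q$. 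The equivalent form $\tfrac12B(q,1/2)=2^{2q-2}B(q,q)$ reflects Legendre's duplication formula for $\Gamma$.

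Plugging $\phi(x)=(x-c)^2-\alpha_q^2$ into \eqref{basic}, using $\phi'(b)=-\phi'(a)=b-a$ and $\phi(a)=\phi(b)=h^2(1-\beta_q^{-2})$, produces \eqref{optimal}. For the error constant I would evaluate $\|\phi\|_q^q$ by splitting the integral at $\pm\alpha_q$ and rescaling each piece as $t=\alpha_q u$; the resulting integral $\int_1^{\beta_q}(u^2-1)^q\,du$ collapses via one integration by parts into a boundary term plus $\int_1^{\beta_q}(u^2-1)^{q-1}\,du$, and the latter is $\tfrac12B(q,1/2)$ by \eqref{betaeqn}. After simplification using $\alpha_q\beta_q=h$, everything reduces to $\|\phi\|_q^q=2h^{2q+1}(1-\beta_q^{-2})^q/(2q+1)$, which upon substituting $h=(b-a)/2$ yields \eqref{optimalerror}.

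Sharpness of the coefficient follows from the equality case of H\"older's inequality. Define $f$ by twice antidifferentiating $f''(x)=\mathrm{sgn}(\phi(x))|\phi(x)|^{q-1}$. Since $(q-1)p=q$, one has $|f''|^p=|\phi|^q$, hence $\|f''\|_p=\|\phi\|_q^{q-1}$ and $\int_a^b f''\phi\,dx=\|\phi\|_q^q=\|f''\|_p\|\phi\|_q$, so equality holds in \eqref{optimalerror}. The main obstacle in the argument is the closed-form simplification of $\|\phi\|_q$: a direct evaluation produces two unwieldy incomplete beta integrals, and the clean formula \eqref{optimalerror} materializes only after the defining relation \eqref{betaeqn} is used to cancel the key term in the integration-by-parts identity.
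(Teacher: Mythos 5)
Your proposal is correct and follows essentially the same route as the paper: reduce to $\phi(x)=(x-c)^2-\alpha^2$ via Lemma~\ref{lemma}, differentiate $\|\phi\|_q^q$ in $\alpha$ to obtain the critical-point equation \eqref{betaeqn}, use the integration-by-parts recurrence together with \eqref{betaeqn} to collapse $\|\phi\|_q^q$ to $2h^{2q+1}(1-\beta_q^{-2})^q/(2q+1)$, and invoke the equality case of H\"older (with $f''=\mathrm{sgn}(\phi)\,|\phi|^{q-1}$, the same extremal as the paper's $|\phi|^{1/(p-1)}$) for sharpness. The only differences are cosmetic, e.g.\ you differentiate before rescaling by $t=\alpha u$ whereas the paper rescales first.
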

Note that the numbers $\alpha_q$ and $\beta_q$ are independent of
$f$, while $\beta_q$ are also independent of the interval $[a,b]$. 
\begin{corollary}\label{corollary1}
If $p=1$ and  $q=\infty$
then $\beta_\infty=\sqrt{2}$, $\alpha_\infty=(b-a)/(2\sqrt{2})$
gives the unique minimum error.
The quadrature
formula is
\begin{equation}
\intab f(x)\,dx=\frac{(b-a)}{2}\left[f(a)+f(b)\right] +
\frac{(b-a)^2}{16}\left[f'(a) -f'(b)\right]
+E(f),\label{optimal1}
\end{equation}
where the optimal error is
$\abs{E(f)}  \leq 
\norm{f''}_1(b-a)^{2}/16.$
\end{corollary}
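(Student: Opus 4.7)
The plan is to reduce Corollary~\ref{corollary1} to a direct application of \eqref{basic} rather than attempting a $q\to\infty$ limit of Theorem~\ref{theoremminimize}, which is stated only for $1<p<\infty$. The argument parallels that of the main theorem but becomes substantially simpler, since at $p=1$ the H\"older bound for $E(f)$ involves the sup norm of $\phi$ in place of an $L^q$ integral, so no transcendental equation needs to be solved.

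First I would invoke the structural result Lemma~\ref{lemma} cited before Theorem~\ref{theoremminimize} to reduce the search to monic quadratics $\phi(x)=(x-c)^2-\alpha^2$ with $c$ the midpoint of $[a,b]$. A short calculation gives $\phi(a)=\phi(b)=(b-a)^2/4-\alpha^2$ and $\phi'(b)=-\phi'(a)=b-a$; substituting these into \eqref{basic} turns the identity into
\be
\intab f(x)\,dx = \frac{b-a}{2}[f(a)+f(b)] + \frac{1}{2}\left[\frac{(b-a)^2}{4}-\alpha^2\right][f'(a)-f'(b)] + E(f).
\ee

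Next, for $p=1$ the H\"older inequality applied to $E(f)=(1/2)\intab f''(x)\phi(x)\,dx$ yields $\abs{E(f)}\le (1/2)\norm{f''}_1\norm{\phi}_\infty$. Since $\phi$ is symmetric about $c$ with interior minimum $-\alpha^2$ at $c$ and equal boundary value $(b-a)^2/4-\alpha^2$, one has $\norm{\phi}_\infty=\max\{\alpha^2,\,(b-a)^2/4-\alpha^2\}$. One term is strictly increasing and the other strictly decreasing in $\alpha^2$, so the minimum over $\alpha$ is unique and occurs where the two terms coincide, giving $\alpha^2=(b-a)^2/8$. Hence $\alpha_\infty=(b-a)/(2\sqrt{2})$ and $\beta_\infty=(b-a)/(2\alpha_\infty)=\sqrt{2}$. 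Plugging $\alpha^2=(b-a)^2/8$ back into the quadrature identity above produces \eqref{optimal1}, and $\norm{\phi}_\infty=(b-a)^2/8$ yields $\abs{E(f)}\le \norm{f''}_1(b-a)^2/16$.

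The remaining task, and the one where I expect the only real work, is to verify that the constant $1/16$ cannot be improved under the sole assumption $f''\in L^1([a,b])$. The natural approach is to construct a sequence $(f_n)$ by letting $f_n''$ be an $L^1$-normalized bump concentrating at a point where $\abs{\phi}$ attains its maximum; the midpoint $c$ works, since $\abs{\phi(c)}=\alpha_\infty^2=(b-a)^2/8=\norm{\phi}_\infty$. Choosing the sign of the bump to match the sign of $\phi(c)$ forces $\intab f_n''\,\phi$ to converge to $-(b-a)^2/8$, so that $\abs{E(f_n)}/\norm{f_n''}_1\to (b-a)^2/16$. Defining $f_n$ by integrating $f_n''$ twice produces an admissible test function whose boundary and derivative data contribute vanishing corrections, which establishes sharpness and completes the proof.
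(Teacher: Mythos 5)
Your proposal is correct and follows essentially the same route as the paper: reduce to $\phi(x)=(x-c)^2-\alpha^2$ via Lemma~\ref{lemma}, minimize $\norm{\phi}_\infty=\max\{\alpha^2,(b-a)^2/4-\alpha^2\}$ by equating the two terms, and prove sharpness with a delta sequence concentrated at the midpoint where $\abs{\phi}$ attains its maximum. The only cosmetic difference is that the paper works on the symmetric interval $[-a,a]$ and places the bump's support in $(0,1/n)$, but the substance of the argument is identical.
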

\begin{corollary}\label{corollary2}
If $p=q=2$
then $\beta_2=\sqrt{3}$, $\alpha_2=(b-a)/(2\sqrt{3})$ gives the unique minimum error.
The quadrature
formula is
\begin{equation}
\intab f(x)\,dx=\frac{(b-a)}{2}\left[f(a)+f(b)\right] +
\frac{(b-a)^2}{12}\left[f'(a) -f'(b)\right]
+E(f),\label{optimal2}
\end{equation}
where the optimal error is
$\abs{E(f)}  \leq 
\norm{f''}_2(b-a)^{2.5}/(12\sqrt{5}).$
\end{corollary}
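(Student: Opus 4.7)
The plan is to deduce Corollary~\ref{corollary2} as a direct specialization of Theorem~\ref{theoremminimize} to the case $p=q=2$. All the structural work has already been done in the theorem, so the task reduces to solving \eqref{betaeqn} explicitly for $q=2$ and then substituting the resulting value of $\beta_2$ into \eqref{optimal} and \eqref{optimalerror}.

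First, I would evaluate both sides of \eqref{betaeqn} at $q=2$. The left-hand integrand $(x^2-1)^{q-1}$ collapses to the polynomial $x^2-1$, so
\[
\int_1^{\beta_2}(x^2-1)\,dx = \frac{\beta_2^3}{3}-\beta_2+\frac{2}{3}.
\]
The right-hand side $2^{2q-2}B(q,q) = 4\,B(2,2)$ evaluates to $4\cdot\Gamma(2)^2/\Gamma(4)=2/3$ (equivalently, $\tfrac12 B(2,1/2)=2/3$). Equating the two gives $\beta_2^3=3\beta_2$, and since the theorem guarantees $\beta_2>1$, we obtain the unique solution $\beta_2=\sqrt{3}$. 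The relation $\alpha_q\beta_q=(b-a)/2$ then yields $\alpha_2=(b-a)/(2\sqrt{3})$.

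Next, I would substitute $\beta_2^{-2}=1/3$ into the coefficient of $[f'(a)-f'(b)]$ in \eqref{optimal}:
\[
\frac{(b-a)^2}{8}\bigl(1-\beta_2^{-2}\bigr) = \frac{(b-a)^2}{8}\cdot\frac{2}{3} = \frac{(b-a)^2}{12},
\]
reproducing the coefficient in \eqref{optimal2}. The same substitution into \eqref{optimalerror}, together with $1/q=1/2$ and $q+1/2=5/2$, gives
\[
\abs{E(f)} \leq \frac{\norm{f''}_2 (b-a)^{5/2}\cdot (2/3)}{2^{7/2}\sqrt{5/2}} = \frac{\norm{f''}_2 (b-a)^{5/2}}{12\sqrt{5}},
\]
after simplifying $2^{7/2}\sqrt{5/2}=2^3\sqrt{5}$.

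There is no real obstacle here beyond verifying that the algebra lines up. The only point requiring mild care is the simplification of the denominator $2^{3+1/q}(q+1/2)^{1/q}$ at $q=2$; writing $\sqrt{5/2}=\sqrt{5}/\sqrt{2}$ absorbs one factor of $\sqrt{2}$ from $2^{7/2}$ and produces the clean constant $12\sqrt{5}$. Sharpness of the error estimate and uniqueness of the minimizing polynomial are inherited directly from Theorem~\ref{theoremminimize}.
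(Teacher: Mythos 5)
Your proposal is correct and matches the paper's (implicit) argument: the paper gives no separate proof for this corollary precisely because it is the direct specialization of Theorem~\ref{theoremminimize} at $q=2$, where \eqref{betaeqn} reduces to $\beta_2^3=3\beta_2$ and hence $\beta_2=\sqrt{3}$. Your algebraic substitutions into \eqref{optimal} and \eqref{optimalerror}, including the simplification $2^{7/2}(5/2)^{1/2}=8\sqrt{5}$, all check out, and sharpness is indeed inherited from the theorem.
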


\begin{corollary}\label{corollaryinfinity}
If $p=\infty$ and $q=1$
then $\beta_1=2$, $\alpha_1=(b-a)/4$
gives the unique minimum error.
The quadrature
formula is
\begin{equation}
\intab f(x)\,dx=\frac{(b-a)}{2}\left[f(a)+f(b)\right] +
\frac{3(b-a)^2}{32}[f'(a) -f'(b)]
+E(f),\label{optimalinfinity}
\end{equation}
where the optimal error is
$\abs{E(f)}  \leq 
\norm{f''}_\infty(b-a)^{3}/32.$
\end{corollary}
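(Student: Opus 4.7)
The plan is to adapt the approach of Theorem \ref{theoremminimize} to the endpoint case $p = \infty$, $q = 1$ which the theorem itself excludes. By Lemma \ref{lemma}, the optimal monic quadratic has the form $\phi(x) = (x-c)^2 - \alpha^2$ with $c = (a+b)/2$. Applying the H\"older inequality to $E(f) = (1/2)\intab f''(x)\phi(x)\,dx$ gives $\abs{E(f)} \leq \norm{f''}_\infty \norm{\phi}_1/2$, so the problem reduces to minimizing $\norm{\phi}_1$ over $\alpha \geq 0$.

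Setting $h = (b-a)/2$ and substituting $u = x - c$, one splits at the roots $u = \pm\alpha$ to obtain, for $0 \leq \alpha \leq h$,
\begin{equation*}
\norm{\phi}_1 = \int_{-h}^{h} \abs{u^2 - \alpha^2}\,du = \frac{8\alpha^3}{3} - 2\alpha^2 h + \frac{2h^3}{3}.
\end{equation*}
Differentiating in $\alpha$ gives $4\alpha(2\alpha - h) = 0$, whose unique positive root $\alpha_1 = h/2 = (b-a)/4$ is a minimum by the second-derivative test (the case $\alpha > h$ yields only larger values). Hence $\beta_1 = h/\alpha_1 = 2$, which also matches \eqref{betaeqn} at $q = 1$: the left side is simply $\beta_1 - 1$ and the right side is $(1/2)B(1, 1/2) = 1$.

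Back-substituting, $\norm{\phi}_1 = h^3/2 = (b-a)^3/16$, giving the error bound $\abs{E(f)} \leq \norm{f''}_\infty (b-a)^3/32$. The coefficient of $f'(a) - f'(b)$ in \eqref{optimalinfinity}, namely $(b-a)^2(1 - \beta_1^{-2})/8 = 3(b-a)^2/32$, then follows from \eqref{optimal} with $\beta_1 = 2$. For sharpness, I would take $f''(x) = \operatorname{sign}(\phi(x))$, which produces an admissible $f$ whose derivative $f'$ is piecewise linear and absolutely continuous; then equality holds in H\"older and $E(f) = \norm{\phi}_1/2$, attaining the bound.

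The main issue to watch for is that the $q = 1$ case lies outside the $1 < p < \infty$ range of Theorem \ref{theoremminimize}, so one must verify that Lemma \ref{lemma} still forces $\phi$ into the quadratic form used here and that the piecewise-continuous extremal $f''$ above is admissible in \eqref{basic}; both are routine. Notably, no transcendental equation needs to be solved numerically at this endpoint since the factor $(x^2-1)^{q-1}$ in \eqref{betaeqn} becomes constant when $q = 1$, collapsing the problem to a linear equation for $\beta_1$.
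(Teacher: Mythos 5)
Your proposal is correct and follows essentially the same route as the paper: reduce via Lemma~\ref{lemma} to $\phi(x)=(x-c)^2-\alpha^2$, minimize $\norm{\phi}_1$ explicitly in $\alpha$ (your direct computation of $\norm{\phi}_1$ and its derivative is just the $q=1$ instance of the paper's formulas \eqref{Gq}--\eqref{G'q}), and establish sharpness with the extremal $f''=d\,\operatorname{sgn}[\phi]$, exactly as in the paper's proof. All the numerical values ($\alpha_1=(b-a)/4$, $\beta_1=2$, $\norm{\phi}_1=(b-a)^3/16$, coefficient $3(b-a)^2/32$) check out.
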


Other authors have obtained corrected trapezoidal rules under the
assumption $f''\in L^p([a,b])$, generally
with different coefficients of $f'(a)-f'(b)$ in \eqref{optimal}
and strictly larger coefficients of
$\norm{f''}_p(b-a)^{2+1/q}$ than in \eqref{optimalerror}.  See
Cerone and Dragomir
\cite{ceronedragomirtrap} equation (3.64), where the coefficient
of  $f'(a)-f'(b)$ is $(b-a)^2/8$ for all values of $p$.  
In Theorem~3.24 for $p=\infty$
they have the coefficient of $f'(a)-f'(b)$ as $(b-a)^2/12$.  This
coefficient is only sharp for $p=2$ (Corollary~\ref{corollary2}).
The
estimate $\abs{E(f)}\leq [\sup(f)-\inf(f)](b-a)^3/
(24\sqrt{5})$ is proved.
Dedi\`{c}, Mati\`{c} and Pe\v{c}ari\`{c}
\cite[Corollaries~9, 12]{dedicmaticpecaric} also consider
corrected trapezoidal rules with $f''\in L^p([a,b])$.
In their Corollary~9 they have the coefficient of $f'(a)-f'(b)$ as 
$(b-a)^2/12$ and
prove the larger estimate $\abs{E(f)}\leq \norm{f''}_\infty(b-a)^3/
(18\sqrt{3})$.
The results of our Corollary~\ref{corollary2} appear as their Corollary~12 
(without sharpness).  Similarly in \cite{dingyewang}.
\begin{corollary}\label{corollarycomposite}
For a uniform partition given by $x_i=a+(b-a)i/n$, $0\leq i\leq n$,
the composite formula is
\begin{equation}
\intab f(x)\,dx=\frac{b-a}{2n}\left[f(a)+2\sum_{i=1}^{n-1} f(x_i)+f(b)\right] +
\frac{(b-a)^2}{8n^2}(1-\beta_q^{-2})
[f'(a) -f'(b)]
+E(f),\label{optimalcomposite}
\end{equation}
where
\begin{eqnarray*}
\abs{E(f)}  \leq \left\{\begin{array}{ll}
\norm{f''}_1(b-a)^{2}/(16n^2), & p=1\\
\frac{\norm{f''}_p(b-a)^{2+1/q}(1-\beta_q^{-2})}{2^{3+1/q}(q+1/2)^{1/q}\,n^{
2}},
 & 1<p<\infty\\
\norm{f''}_\infty(b-a)^{3}/(32n^2), & p=\infty.
\end{array}
\right.
\end{eqnarray*}
The coefficient of $\norm{f''}_p$ in the error bound is the best 
possible.
\end{corollary}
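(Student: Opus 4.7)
The plan is to apply Theorem~\ref{theoremminimize} (for $1<p<\infty$) together with Corollaries~\ref{corollary1} and~\ref{corollaryinfinity} (for $p=1$ and $p=\infty$) on each subinterval $[x_{i-1},x_i]$ of length $h=(b-a)/n$, and then sum. On $[x_{i-1},x_i]$, \eqref{optimal} reads
\begin{equation*}
\int_{x_{i-1}}^{x_i} f(x)\,dx = \frac{h}{2}\bigl[f(x_{i-1})+f(x_i)\bigr] + \frac{h^2}{8}(1-\beta_q^{-2})\bigl[f'(x_{i-1})-f'(x_i)\bigr] + E_i(f).
\end{equation*}
Summing over $i=1,\dots,n$, the function values assemble into the composite trapezoidal sum, and the correction terms telescope because the coefficient $(h^2/8)(1-\beta_q^{-2})$ is independent of $i$; only $f'(a)-f'(b)$ survives. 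Substituting $h=(b-a)/n$ produces the quadrature displayed in~\eqref{optimalcomposite}.

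For the error bound I would use $\abs{E(f)}\le\sum_i\abs{E_i(f)}$ and insert the per-interval estimates. The cases $p=1$ and $p=\infty$ are immediate: in the first, the $L^1$-norms on the subintervals add to $\norm{f''}_1$; in the second, each $\abs{E_i(f)}$ is at most $\norm{f''}_\infty h^3/32$, and summing $n$ copies gives the factor $(b-a)^3/(32n^2)$. For $1<p<\infty$ I would apply H\"older's inequality in the discrete form
\begin{equation*}
\sum_{i=1}^{n}\norm{f''}_{L^p([x_{i-1},x_i])} \le n^{1/q}\Bigl(\sum_{i=1}^{n}\norm{f''}_{L^p([x_{i-1},x_i])}^{p}\Bigr)^{1/p} = n^{1/q}\norm{f''}_p,
\end{equation*}
and combine with the factor $h^{2+1/q}=((b-a)/n)^{2+1/q}$ from \eqref{optimalerror}. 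Since $n^{1/q}\cdot n^{-(2+1/q)}=n^{-2}$, this yields the stated $1/n^2$ rate.

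Sharpness I would prove by \emph{gluing} the extremizers of the single-interval results. Let $g$ be the extremizer that saturates \eqref{optimalerror} on $[x_0,x_1]$ (proportional to $\operatorname{sgn}(\phi)\abs{\phi}^{q-1}$ for $1<p<\infty$, with analogous explicit forms for $p=1,\infty$). Translate $g$ to each subinterval $[x_{i-1},x_i]$, choosing a common sign so that every $E_i(f)$ has the same sign, and let $f$ be any double antiderivative of the resulting function. Then $f'$ is absolutely continuous, $\norm{f''}_{L^p([x_{i-1},x_i])}$ is independent of $i$, and equality holds simultaneously in every per-interval H\"older step, in the power-mean inequality above, and in the triangle inequality; hence the constant in the error bound cannot be improved.

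The main obstacle is the sharpness argument, since it requires a single $f$ that saturates three different inequalities at once while still satisfying $f''\in L^p([a,b])$. Once the explicit per-interval extremizer is extracted from the proof of Theorem~\ref{theoremminimize}, the gluing is routine, but the $p=1$ case needs an approximating sequence because the true extremizer is supported on a finite set of points and is realized only as a measure.
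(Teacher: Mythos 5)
Your proposal is correct and follows essentially the same route as the paper, whose entire proof is the one-line remark that the result ``follows using the H\"older inequality for series'' applied to the per-interval estimates --- exactly your discrete H\"older step $\sum_i\norm{f''}_{L^p([x_{i-1},x_i])}h^{2+1/q}\leq n^{1/q}\norm{f''}_p\,h^{2+1/q}=\norm{f''}_p(b-a)^{2+1/q}n^{-2}$ combined with the telescoping of the correction terms. You in fact supply more than the paper does, since your gluing-of-extremizers argument for sharpness (with the delta-sequence approximation in the $p=1$ case) is left entirely implicit there, and it is sound: translated copies of the single-interval extremizer with a common sign saturate the per-interval H\"older inequalities, the equal-norms condition for the discrete H\"older inequality, and the triangle inequality simultaneously.
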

\begin{proof}
[Proof of Theorem~\ref{theoremminimize}] 
Let $\phi$ be a monic quadratic polynomial.
Integration by parts
gives \eqref{basic}.  We are then led to minimize
$E(f)=(1/2)\intab f''(x)\phi(x)\,dx$.  By the H\"older inequality,
$\abs{E(f)}\leq (1/2)\norm{f''}_p\norm{\phi}_q$.  First consider a symmetric
interval $[-a,a]$.  By Lemma~\ref{lemma}, to 
minimize $\norm{\phi}_q$ we
need only consider $\phi(x)=x^2-\alpha^2$ for some $\alpha\in[0,a]$.
Let $\beta=a/\alpha$.
Define 
\begin{equation}
G_q(\alpha)  =  \norm{\phi}_q^q=2\int_0^a \abs{x^2-\alpha^2}^q\,dx\\
  =  2\alpha^{2q+1}\left(\int_0^1(1-x^2)^q\,dx+\int_1^{a/\alpha}
(x^2-1)^q\,dx\right).\label{Gq}
\end{equation}
Note that
\begin{equation}
G'_q(\alpha)  = 4q\alpha^{2q}\left(\int_0^1(1-x^2)^{q-1}\,dx-\int_1^{a/\alpha}
(x^2-1)^{q-1}\,dx\right).\label{G'q}
\end{equation}
We have $G_q'(0)=0$.  Since the function $\alpha\mapsto \int_1^{a/\alpha}
(x^2-1)^{q-1}\,dx$ decreases from positive infinity to zero as 
$\alpha$ increases from zero to $a$,
it follows that $G_q'$ has a unique root in $(0,a)$ and $G_q$ has a
unique minimum at $\alpha_q\in(0,a)$.
The first integral in \eqref{G'q} can be evaluated in terms of beta and
gamma functions.  We have \cite[5.12.1]{nist}
$$
\int_0^1(1-x^2)^{q-1}\,dx =\frac{1}{2}B(q,1/2)=2^{2q-2}B(q,q)=
\frac{\sqrt{\pi}\,\Gamma(q)}{2\Gamma(q+1/2)}.
$$
Let $\beta_q=a/\alpha_q$.
The required minimizing polynomial is then determined by the
unique root $\beta_q\in(1,\infty)$ of the equation
\begin{equation}
\int_1^{\beta_q}
(x^2-1)^{q-1}\,dx =\frac{1}{2}B(q,1/2).\label{beta}
\end{equation}

To compute $\norm{\phi}_q$, evaluate the final integral in \eqref{Gq}. 
Integration by parts establishes the recurrence relation
\begin{equation}
\int(x^2-1)^q\,dx=\frac{x(x^2-1)^q}{2q+1}-\frac{2q}{2q+1}\int(x^2-1)^{q-1}\,dx.
\label{recurrence}
\end{equation}
Using this and the corresponding version with integrand $(1-x^2)^q$, we
obtain $G_q(\alpha_q)=2a(a^2-\alpha_q^2)^q/(2q+1)$.
It then follows that 
$$
\abs{E(f)}\leq 
\frac{\norm{f''}_p(2a)^{2+1/q}[1-(\alpha_q/a)^2]}{2^{3+1/q}(q+1/2)^{1/q}}.
$$
Replacing $a$ with $(b-a)/2$ establishes the error estimate for interval
$[a,b]$.  Using $\phi(x)=(x-c)^2-\alpha_q^2$ the formula \eqref{optimal}
now follows.

With the H\"older inequality,
$\abs{\intab f''(x)\phi(x)\,dx}\leq\norm{f''}_p\norm{\phi}_q$,
the necessary and sufficient condition for equality
is $f''(x)=d\,{\rm sgn}[\phi(x)]\abs{\phi(x)}^{1/(p-1)}$ for some $d\in\R$ and 
almost
all $x\in[a,b]$.  See \cite[p.~46]{liebloss}.
Integrating gives $f(x)=d\int_a^x(x-t){\rm sgn}[\phi(t)]\abs{\phi(t)}^{1/(p-1)}\,
dt$, modulo a linear function.
\end{proof}

\begin{proof}
[Proof of Corollary~\ref{corollary1}]
It suffices to consider the interval $[-a,a]$.
When $p=1$ write $\phi(x)=x^2-\alpha_\infty^2$.  
By Lemma~\ref{lemma} we need only
consider the case with two distinct roots in $(-a,a)$, i.e., 
$0< \alpha_\infty<a$. 
We have
$$
F_\infty(\phi):=\norm{\phi}_\infty=
\max_{\abs{x}\leq a}\abs{\phi(x)}=\max(\abs{\phi(0)},\phi(a))
=\max(\alpha_\infty^2,a^2-\alpha_\infty^2).
$$
It follows that
$\alpha_\infty=a/\sqrt{2}$. 
Formula \eqref{optimal1} and the error estimate now follow easily.
There is equality in 
$\abs{\int_{-a}^a f''(x)\phi(x)\,dx}\leq\norm{f''}_1\norm{\phi}_\infty$ whenever
$\phi(x)=d\,{\rm sgn}[f''(x)]$ for some $d\in\R$ and almost
all $x\in[-a,a]$.  See \cite[p.~46]{liebloss}.
This does not hold but we can show the coefficient of $\norm{f''}_1$
cannot be reduced by considering a sequence of functions.  If
$f''=\delta$, the Dirac distribution supported at $0$, then 
$\abs{\int_{-a}^a f''(x)(x^2-\alpha_\infty^2)\,dx}= \alpha_\infty^2=a^2/2
=\norm{f''}_1\norm{\phi}_\infty$.  Of course, if $f''=\delta$
then $f'=\chi_{(0,\infty)}$ which is not absolutely continuous.  
Let
$(\psi_n)$ be a delta sequence.  This is a sequence
of continuous functions $\psi_n\geq 0$ with support in $(0,1/n)$ such
that $\int_0^{1/n}\psi_n(x)\,dx=1$.  Now define 
$f_n(x)=\int_{-a}^x\int_{-a}^y\psi_n(z)\,dz\,dy$.  Then 
$f_n'$ is absolutely continuous and $\norm{f_n''}_1=1$.   
Since $\phi$ is continuous we
have $\lim_{n\to\infty}\abs{\int_{-a}^a f_n''(x)(x^2-\alpha_\infty^2)\,dx}=a^2/2
=\norm{\phi}_\infty$.
The error estimate is then optimal.
\end{proof}
\begin{proof}
[Proof of Corollary~\ref{corollaryinfinity}]
Write $\phi(x)=x^2-\alpha_1^2$.  By Lemma~\ref{lemma} we need only
consider the case with two distinct roots in $(-a,a)$, i.e., 
$0< \alpha_1< a$.  Equation \eqref{G'q} now becomes
$G_1'(\alpha_1)=2\alpha_1(12\alpha_1-6a)/3$, from which $\alpha_1=a/2$.
This then gives \eqref{optimalinfinity}.
There is equality in 
$\abs{\int_{-a}^a f''(x)\phi(x)\,dx}\leq\norm{f''}_\infty\norm{\phi}_1$ whenever
$f''(x)=d\,{\rm sgn}[\phi(x)]$ for some $d\in\R$ and almost
all $x\in[-a,a]$.  See \cite[p.~46]{liebloss}.
Integrating gives $f(x)=d\int_{-a}^x(x-t){\rm sgn}[\phi(t)]\,
dt$, modulo a linear function.
\end{proof}
This case also appears in \cite[Theorem~1]{talvilawiersma}.

The proof of Corollary~\ref{corollarycomposite}
follows using the H\"older inequality for series as in the proof
of Theorem~3.26 in \cite{ceronedragomirtrap}.

Lemma~\ref{lemma}
shows that the
minimum of $\norm{\phi}_q$ over monic quadratics is unique.  
Hence, the coefficient
of $\norm{f''}_p(b-a)^{2+1/q}$ in \eqref{optimalerror} is
strictly less than for any other choice of $\alpha_q$ and indeed
for any other choice of monic quadratic.  In particular, we
get a smaller coefficient than in the trapezoidal rule \eqref{trapp}.

\section{Evaluation and approximation of $\beta_q$}\label{sectionbeta}
In Corollaries~\ref{corollary1} through \ref{corollaryinfinity} we
were able to compute the exact value of $\alpha_q$ and $\beta_q$
for $q=1,2,\infty$.  In this section we compute $\beta_4$ as the
root of a cubic polynomial.  When $q$ is an integer, equation
\eqref{beta} becomes a polynomial of degree $2q-1$.  See
\eqref{poly1} and \eqref{poly2}.  When $q$ is
even the degree reduces to $q-1$ and we compute the exact value
of $\beta_4$ \eqref{3/2}.  In general, equation \eqref{beta}
is transcendental and most likely cannot be solved exactly.  We rewrite this in terms of hypergeometric
and associated Legendre functions and also show that 
$\sqrt{2}\leq\beta_q\leq 2$ and is decreasing 
(Proposition~\ref{betadecreasing}).    In Theorem~\ref{theoremapproxbeta}
we show how
the corrected trapezoidal rule can give good approximations 
of the integral of $f$ with $f''\in L^p([a,b])$ for all $1\leq p\leq\infty$
even if the exact value of $\beta_q$ is not known.  Part (c) of
this theorem shows that the corrected trapezoidal rule with $\alpha=0$,
$\beta=\infty$ gives a smaller error estimate than \eqref{trapp}
for all $1\leq q<\infty$.  The coefficient is a simple function of $q$.

If $q$ is
an integer, use the binomial theorem to write \eqref{beta} as
\begin{equation}
\int_1^{\beta_q}
(x^2-1)^{q-1}\,dx=\sum_{k=0}^{q-1}\binom{q-1}{k}\frac{(-1)^{q-1+k}}{2k+1}
\left[\beta_q^{2k+1}-1\right]
=\sum_{k=0}^{q-1}\binom{q-1}{k}\frac{(-1)^{k}}{2k+1}.\label{poly1}
\end{equation}
This gives a polynomial of degree $2q-1$ for $\beta_q$.  
When $q$ is
even this reduces to 
\begin{equation}
\sum_{k=0}^{q-1}\binom{q-1}{k}\frac{(-1)^{k}\beta_q^{2k}}{2k+1}=0.
\label{betapoly1}
\end{equation}

When $p=4/3$ and $q=4$ we get the polynomial 
$\beta_4^6-(21/5)\beta_4^4+7\beta_4^2-7=0$.
The unique solution is \cite[1.11(iii)]{nist} 
\begin{equation}
\beta_4=\left\{
\frac{2(7^{1/3})}{(3^{2/3})5}\left(\left[5\sqrt{30}+27\right]^{1/3}-
\left[5\sqrt{30}-27\right]^{1/3}\right)+7/5\right\}^{1/2}
\doteq 1.589291662.
\label{3/2}
\end{equation}
This can then be used in equations \eqref{optimal} and
\eqref{optimalerror}.

Repeated use of \eqref{recurrence} yields the equivalent series
form of \eqref{beta} when $q$ is  an integer
\begin{equation}
\beta_q\sum_{k=0}^{q-1}(-1)^k
\binom{2k}{k}\left(\frac{\beta_q^2-1}{4}\right)^k=1-(-1)^q.\label{poly2}
\end{equation}
When $q$ is even this simplifies to
\begin{equation}
\sum_{k=0}^{q-1}\binom{2k}{k}\left(\frac{1-\beta_q^2}{4}\right)^k
=\sum_{k=0}^{q-1}\frac{(1/2)_k}{k!}(1-\beta_q^2)^k=0.
\end{equation}

When $p=3$ and $q=3/2$ the integrals in \eqref{beta} can be
evaluated in terms of elementary functions but this leads to a
transcendental equation for $\beta_{3/2}$. Similarly when $q$ is a
half integer.  

As can be seen from \eqref{3/2} the numbers $\beta_q$ are not 
necessarily simple functions of $q$.  For cases other than
$p=1,2,\infty$ they can be numerically approximated and for this
there are many other ways \eqref{beta} can be rewritten.  For example,
using a linear change of variables and then the identities 
\cite[15.6.1, 15.9.21]{nist} the integral in \eqref{beta}
can be written in terms of hypergeometric and associated Legendre
functions.  The result is
\begin{eqnarray*}
\int_1^{\beta_q}
(x^2-1)^{q-1}\,dx & = &
2^{q-1}(\beta_q-1)^q\int_0^1\left[1-(1-\beta_q)x/2\right]^{q-1}x^{q-1}\,dx\\
 & = & \frac{2^{q-1}(\beta_q-1)^q}{q} {_2}F_1(1-q,q;1+q;(1-\beta_q)/2)\\
 & = & 2^{q-1}\Gamma(q)\left(\beta_q^2-1\right)^{q/2}
P_{q-1}^{-q}(\beta_q).
\end{eqnarray*}

Numerical equation solvers can now be applied to any of these
representations of $\int_1^{\beta_q}
(x^2-1)^{q-1}\,dx$ in order to solve \eqref{beta}.

The range of $\beta_q$ is known.
\begin{prop}\label{betadecreasing}
$\beta_q$ is a decreasing function of $q$ and 
$\sqrt{2}\leq \beta_q\leq 2$ for $1\leq q\leq\infty$.
\end{prop}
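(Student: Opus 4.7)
The plan is to realize the defining relation~\eqref{beta} as $F(q, \beta_q) = 0$, where
\[ F(q, \beta) := \int_1^\beta (x^2-1)^{q-1}\,dx - \int_0^1 (1-x^2)^{q-1}\,dx, \]
and to prove both the monotonicity and the two-sided bound by a single implicit-function-theorem computation. Since $\partial F/\partial\beta = (\beta^2-1)^{q-1} > 0$ for $\beta > 1$, everything reduces to controlling the sign of $\partial F/\partial q$ at $\beta = \beta_q$ and evaluating $F$ at the candidate endpoints $\beta = \sqrt{2}$ and $\beta = 2$.

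I would first dispatch the lower bound $\beta_q \ge \sqrt{2}$. Substituting $y = x^2-1$ in the first integral and $y = 1-x^2$ in the second gives
\[ F(q, \sqrt{2}) = \int_0^1 y^{q-1}\Bigl[\frac{1}{2\sqrt{1+y}} - \frac{1}{2\sqrt{1-y}}\Bigr]dy, \]
which is strictly negative because the bracket is strictly negative on $(0,1)$. Since $F$ is increasing in $\beta$, this forces $\beta_q > \sqrt{2}$ for every finite $q$; combined with $\beta_\infty = \sqrt{2}$ from Corollary~\ref{corollary1} one obtains $\beta_q \ge \sqrt{2}$ throughout $[1,\infty]$.

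For monotonicity I differentiate under the integral to get
\[ \frac{\partial F}{\partial q}(q, \beta) = \int_1^\beta (x^2-1)^{q-1}\ln(x^2-1)\,dx - \int_0^1 (1-x^2)^{q-1}\ln(1-x^2)\,dx, \]
perform the same two substitutions, and invoke the bound $\beta_q^2-1 \ge 1$ to split the resulting $y$-integral over $[0, \beta_q^2-1]$ into pieces over $[0,1]$ and $[1, \beta_q^2-1]$:
\[ \frac{\partial F}{\partial q}(q, \beta_q) = \int_0^1 y^{q-1}\ln y\Bigl[\frac{1}{2\sqrt{1+y}} - \frac{1}{2\sqrt{1-y}}\Bigr]dy + \int_1^{\beta_q^2-1}\frac{y^{q-1}\ln y}{2\sqrt{1+y}}\,dy. \]
In the first piece $\ln y \le 0$ and the bracket is negative, so the integrand is non-negative; in the second, $\ln y \ge 0$. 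Hence $\partial F/\partial q > 0$ at $(q, \beta_q)$, and the implicit function theorem yields $d\beta_q/dq < 0$. The upper bound $\beta_q \le 2$ then follows from monotonicity together with $\beta_1 = 2$ from Corollary~\ref{corollaryinfinity}.

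The main obstacle I expect is precisely the sign analysis in the monotonicity step: without the lower bound $\beta_q \ge \sqrt{2}$ already in hand, the $\ln y$ factor would carry a sign that could compete with the sign of the geometric bracket, and the net sign of $\partial F/\partial q$ would be genuinely unclear. Proving the $\sqrt{2}$-bound first, so that the clean splitting into $[0,1]$ and $[1, \beta_q^2-1]$ is available, is what unlocks the argument.
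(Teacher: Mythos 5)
Your proof is correct and takes essentially the same route as the paper's: the same substitutions $y=x^2-1$ and $y=1-x^2$, the same comparison of $(1+y)^{-1/2}$ with $(1-y)^{-1/2}$ to obtain $\beta_q\ge\sqrt{2}$ first, and then the same sign analysis for monotonicity (the paper phrases it as $J'(q)<I'(q)<0$ for the two $y$-integrals rather than invoking the implicit function theorem explicitly), with $\beta_q\le 2$ following from $\beta_1=2$.
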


\begin{proof}
A change of variables shows that \eqref{betaeqn} is equivalent to
$\int_0^{\beta_q^2-1}y^{q-1}(1+y)^{-1/2}\,dy=J(q)$, where 
$J(q)=\int_0^1 y^{q-1}(1-y)^{-1/2}\,dy$.
Since $(1+y)^{-1/2}\leq (1-y)^{-1/2}$ for
all $0\leq y<1$ we must have $\beta_q^2-1\geq 1$.

Equation \eqref{betaeqn} can also be written as
$
I(q)+2\int_{\sqrt{2}}^{\beta_q}(x^2-1)^{q-1}\,dx
=J(q)$
where $I(q)=\int_0^1y^{q-1}(1+y)^{-1/2}\,dy$.
The argument above shows that $J'(q)<I'(q)<0$ for all $1<q<\infty$.
Since $\beta_q\geq\sqrt{2}$ it must be a decreasing function of $q$.
\end{proof}
Even without knowing the exact value of $\beta_q$ we can use the
method of Theorem~\ref{theoremminimize} to obtain good estimates
of the error in corrected trapezoidal rules.
\begin{theorem}\label{theoremapproxbeta}
Let $c$ be the midpoint of $[a,b]$.
Let $f\fn[a,b]\to\R$ such that $f'$ is absolutely
continuous and $f''\in L^p([a,b])$ for some $1\leq p<\infty$.
(a) If $1\leq p<2$ let $\phi(x)=(x-c)^2-(b-a)^2/8$.
Equation \eqref{basic} gives
the quadrature
formula
\begin{equation}
\intab f(x)\,dx=\frac{(b-a)}{2}\left[f(a)+f(b)\right] +
\frac{(b-a)^2}{16}\left[f'(a) -f'(b)\right]
+E(f),\label{approxbeta1}
\end{equation}
where the error satisfies
$\abs{E(f)}  \leq 
\norm{f''}_1(b-a)^{2}/16.$
(b) If $2\leq p<\infty$ let $\phi(x)=(x-c)^2-(b-a)^2/12$.
Equation \eqref{basic} gives
the quadrature
formula
\begin{equation}
\intab f(x)\,dx=\frac{(b-a)}{2}\left[f(a)+f(b)\right] +
\frac{(b-a)^2}{12}\left[f'(a) -f'(b)\right]
+E(f),\label{approxbeta2}
\end{equation}
where the error satisfies
$\abs{E(f)}  \leq 
\norm{f''}_2(b-a)^{2.5}/(12\sqrt{5}).$
(c) If $1< p<\infty$ let $\phi(x)=(x-c)^2$ and $1/p+1/q=1$.
Equation \eqref{basic} gives
the quadrature
formula
\begin{equation}
\intab f(x)\,dx=\frac{(b-a)}{2}\left[f(a)+f(b)\right] +
\frac{(b-a)^2}{8}\left[f'(a) -f'(b)\right]
+E(f),\label{approxbeta1infinity}
\end{equation}
where the error satisfies
\begin{equation}
\abs{E(f)}  \leq 
\frac{\norm{f''}_p(b-a)^{2+1/q}}{2^{3+1/q}(q+1/2)^{1/q}}.
\end{equation}
\end{theorem}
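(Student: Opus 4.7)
The plan is to unify all three parts by substituting a monic quadratic of the form $\phi(x)=(x-c)^2-\alpha^2$ into the identity \eqref{basic} and then bounding the resulting error $E(f)=(1/2)\int_a^b f''(x)\phi(x)\,dx$ by H\"older's inequality in the conjugate pair tailored to each case. Most of the work is already packaged in the proofs of Theorem~\ref{theoremminimize} and its corollaries, so the proof of Theorem~\ref{theoremapproxbeta} amounts to specializing $\alpha$ and computing $\norm{\phi}_q$ for the particular $q$ used in each bound.

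First I would record the general template. For any monic quadratic $\phi(x)=(x-c)^2-\alpha^2$ with $c=(a+b)/2$, one has $\phi'(a)=-(b-a)$, $\phi'(b)=b-a$, and $\phi(a)=\phi(b)=(b-a)^2/4-\alpha^2$. Substituting into \eqref{basic} collapses it to
$$\intab f(x)\,dx=\frac{b-a}{2}[f(a)+f(b)]+\left[\frac{(b-a)^2}{8}-\frac{\alpha^2}{2}\right][f'(a)-f'(b)]+E(f).$$
Taking $\alpha^2=(b-a)^2/8$, $(b-a)^2/12$, and $0$ immediately produces the quadrature formulas \eqref{approxbeta1}, \eqref{approxbeta2}, and \eqref{approxbeta1infinity} respectively; this part is purely algebraic.

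Next I would handle the three error bounds separately. For (a), since $[a,b]$ is bounded we have $L^p([a,b])\subset L^1([a,b])$ for $p\geq 1$, so H\"older with exponents $1$ and $\infty$ gives $\abs{E(f)}\leq (1/2)\norm{f''}_1\norm{\phi}_\infty$; a direct evaluation shows that $\phi(x)=(x-c)^2-(b-a)^2/8$ attains values $-(b-a)^2/8$ at $x=c$ and $+(b-a)^2/8$ at the endpoints, so $\norm{\phi}_\infty=(b-a)^2/8$ and the bound $\norm{f''}_1(b-a)^2/16$ follows. For (b), the same embedding argument gives $f''\in L^2([a,b])$, and H\"older with $p=q=2$ together with the computation of $\norm{\phi}_2$ for $\phi(x)=(x-c)^2-(b-a)^2/12$ already carried out in Corollary~\ref{corollary2} yields $\norm{f''}_2(b-a)^{5/2}/(12\sqrt{5})$. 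For (c), with $\phi(x)=(x-c)^2$ a direct integration gives
$$\norm{\phi}_q^q=2\int_0^{(b-a)/2}x^{2q}\,dx=\frac{(b-a)^{2q+1}}{2^{2q}(2q+1)},$$
so $\norm{\phi}_q=(b-a)^{2+1/q}/(4(2q+1)^{1/q})$, and applying H\"older and rewriting $(2q+1)^{1/q}=2^{1/q}(q+1/2)^{1/q}$ gives the stated estimate.

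There is no real obstacle: every step is either a recycled computation from the preceding proofs or a one-line evaluation of an elementary integral. The only thing one must be careful about is noting explicitly that on a bounded interval $L^p\subset L^r$ whenever $p\geq r$, which is what legitimates expressing the bounds in (a) and (b) in the smaller norms $\norm{f''}_1$ and $\norm{f''}_2$ rather than $\norm{f''}_p$.
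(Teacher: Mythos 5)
Your proposal is correct and follows essentially the same route as the paper: the formulas come from substituting $\phi(x)=(x-c)^2-\alpha^2$ into \eqref{basic}, the bounds in (a) and (b) come from the embedding $L^s([a,b])\subset L^r([a,b])$ for $r\leq s$ together with the computations of $\norm{\phi}_\infty$ and $\norm{\phi}_2$ already done in Corollaries~\ref{corollary1} and \ref{corollary2}, and (c) comes from the direct evaluation $\norm{\phi}_q^q=(b-a)^{2q+1}/\bigl(2^{2q+1}(q+1/2)\bigr)$ followed by H\"older. Your version merely spells out the algebra that the paper's terse proof leaves implicit.
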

\begin{proof}
Use the fact that $L^s([a,b])\subset L^r([a,b])$ whenever
$1\leq r\leq s\leq\infty$.  In (a) use the approximation
from Corollary~\ref{corollary1} and in (b) use the approximation
from Corollary~\ref{corollary2}.  In (c) take $\alpha=0$, 
$\beta=\infty$ and
then compute
$$
\norm{\phi}_q^q=2\left(\frac{b-a}{2}\right)^{2q+1}\int_0^1x^{2q}\,dx=
\frac{(b-a)^{2q+1}}{2^{2q+1}(q+1/2)}.
$$
The rest follows as in the proof of Theorem~\ref{theoremminimize}.
\end{proof}
It is also possible to compute $\norm{\phi}_q$ when $\beta=1$ but this gives
the trapezoidal rule \eqref{trapp}.  Note that the coefficient in
part (c) is strictly less than the corresponding coefficient in \eqref{trapp}
for all $1\leq q<\infty$.  In the limit $q\to\infty$ the coefficient becomes
$1/8$ as in \eqref{trapp}.

\section{Lemmas}
Let $\polym$ be the set of monic polynomials of degree $m\in\N$ with real
coefficients.
Define $F_q\fn\polym\to\R$ by $F_q(\phi)=\norm{\phi}_q$ where 
$1\leq q\leq \infty$
and the norms are over compact interval $[a,b]$.
Since $F_q(\phi)$ is bounded below for $\phi\in\polym$ it has
an infimum over $\polym$.  It also has a unique minimum at a polynomial that
has $m$ roots in $[a,b]$.  As well, the error-minimizing polynomial is
even or odd about the midpoint of $[a,b]$ as $m$ is even or odd.

\begin{lemma}\label{lemma}
(a) For $m\geq 2$, let $\phi\in\polym$ with a non-real root.  There  exists
$\psi\in\polym$ with a real root such that $F_q(\psi)<F_q(\phi)$.
(b) Let $\phi\in\polym$ with a root $t\not\in[a,b]$.  
There  exists $\psi\in\polym$ with a root in $[a,b]$ such that 
$F_q(\psi)<F_q(\phi)$.
(c) If $\phi$ minimizes $F_q$ then it has $m$ simple zeros
in $[a,b]$.
(d) If $F_q$ has a minimum in $\polym$ it is unique.
(e) $F_q$ attains its minimum over $\polym$.
(f) If $\phi\in\polym$ is neither even nor odd about $c:=(a+b)/2$ then
there is a polynomial $\psi\in\polym$ that is either even or odd
about $c$ such that $F_q(\psi)< F_q(\phi)$.
(g) The minimum of $F_q$ occurs
at a polynomial $\phi\in\polym$ with $m$ simple zeros
in $[a,b]$.  If $m$ is even about $c$ then
so is $\phi$.  If $m$ is odd about $c$ then so is $\phi$.
This minimizing polynomial is unique.
\end{lemma}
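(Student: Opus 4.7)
My plan is to prove parts (a)--(g) in the order stated, since each later part builds on pointwise comparison and symmetrization principles from the earlier ones.

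For (a), I would exploit that real polynomials have complex roots in conjugate pairs. If $\alpha=s+it$ is a non-real root of $\phi$, factor out $(x-\alpha)(x-\bar\alpha)=(x-s)^2+t^2$ and replace this quadratic by $(x-s)^2$. The resulting $\psi\in\polym$ has a real root at $s$ and satisfies $\abs{\psi(x)}<\abs{\phi(x)}$ pointwise on $\R$, so $F_q(\psi)<F_q(\phi)$. For (b), if $\phi(t)=0$ with $t\notin[a,b]$, say $t>b$, write $\phi(x)=(x-t)\phi_1(x)$ and replace the linear factor $(x-t)$ by $(x-b)$; for $x<b$ we have $\abs{x-b}<\abs{x-t}$, giving a strict pointwise decrease on $[a,b]$ and hence $F_q(\psi)<F_q(\phi)$. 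The case $t<a$ is symmetric. Combining (a) and (b), any minimizer has all $m$ roots in $[a,b]$.

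For (c), it remains to rule out multiple roots. If $r\in[a,b]$ is a root of multiplicity at least two, write $\phi(x)=(x-r)^2\tilde\psi(x)$ and perturb to $\phi_\epsilon(x)=((x-r)^2-\epsilon^2)\tilde\psi(x)\in\polym$. Splitting the integral into $\abs{x-r}\geq\epsilon$ and $\abs{x-r}<\epsilon$ and expanding in $\epsilon$, the leading change in $F_q(\phi_\epsilon)^q$ is $-q\epsilon^2\intab(x-r)^{2q-2}\abs{\tilde\psi(x)}^q\,dx$ plus lower-order terms, strictly negative for small $\epsilon$ since $q\geq 1$. The case $q=\infty$ is handled by direct comparison, using that $\abs{\phi_\epsilon(x)}\le\abs{\phi(x)}$ outside a small neighbourhood of $r$ while $\abs{\phi_\epsilon(r)}=\epsilon^2\abs{\tilde\psi(r)}$ is small. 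Part (e) follows from compactness: identifying $\polym$ with $\R^m$ via the lower-order coefficients, $F_q$ is continuous and coercive, so the infimum is attained.

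For (d), strict convexity of $\norm{\cdot}_q$ for $1<q<\infty$ settles uniqueness: if $\phi_1\neq\phi_2$ both minimized, the midpoint $(\phi_1+\phi_2)/2\in\polym$ would have strictly smaller norm. For (f), I would symmetrize by setting $\tilde\phi(x)=\phi(2c-x)$ when $m$ is even and $\tilde\phi(x)=-\phi(2c-x)$ when $m$ is odd, so that $\tilde\phi\in\polym$ and $\norm{\tilde\phi}_q=\norm{\phi}_q$. Then $\psi=(\phi+\tilde\phi)/2\in\polym$ is even (resp.\ odd) about $c$, and since $\phi\neq\tilde\phi$ (by the hypothesis that $\phi$ is neither even nor odd about $c$), strict convexity gives $F_q(\psi)<F_q(\phi)$. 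Finally, (g) synthesizes the rest: the minimizer exists by (e), is unique by (d), has $m$ simple zeros in $[a,b]$ by (c), and has the stated parity by (f).

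The main obstacle will be the endpoint cases $q=1$ and $q=\infty$ in (d) and (f), where $\norm{\cdot}_q$ fails to be strictly convex. For $q=\infty$ I would fall back on the classical Chebyshev alternation theorem, which gives uniqueness of the best uniform approximation by monic polynomials. For $q=1$ I would use the first-order conditions at a minimizer, writing $\intab {\rm sgn}(\phi)\chi\,dx=0$ for all polynomials $\chi$ of degree less than $m$; this constrains the sign pattern of $\phi$ tightly enough that, combined with the simple-root structure from (c), the roots are uniquely determined. Making these endpoint arguments watertight is the most delicate part of the lemma.
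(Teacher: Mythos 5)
Your outline for (a), (b), (c) and (e) is essentially the paper's own argument: factor out the conjugate pair and drop $s^2$, pull an outside root to the nearer endpoint, perturb a double root $r$ to $(x-r)^2-\epsilon^2$ and show the $q$-norm drops to second order in $\epsilon$, and get existence from compactness (the paper parametrizes by the roots in $[a,b]^m$ rather than by coefficients, but your coercivity argument in coefficient space is equally valid). Part (d) for $1<q<\infty$ via strict convexity of the norm on monic polynomials also coincides with the paper, which phrases it as the equality case of Minkowski's inequality.

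The genuine weakness is exactly where you flag it: uniqueness for $q=1$ and $q=\infty$, which you defer to the Chebyshev alternation theorem and to an unexecuted first-order-condition argument. For $q=1$ in particular, your plan (orthogonality of ${\rm sgn}(\phi)$ to all lower-degree polynomials pinning down the sign pattern) is a real piece of work that you have not done, and as stated it is not obvious how to conclude that the sign-change points are uniquely determined. The paper closes both endpoint cases self-containedly and more cheaply from the midpoint trick you already use: if $\phi$ and $\omega$ are both minimizers, then $\norm{(\phi+\omega)/2}_q\leq\norm{\phi}_q$ forces equality in Minkowski; for $q=1$ equality means $\phi\omega\geq0$ a.e., so $\phi$ and $\omega$ change sign at the same points, and since by (c) each has exactly $m$ simple zeros in $[a,b]$ they share all their roots and, being monic, coincide; for $q=\infty$ the paper proves a small equioscillation lemma (each minimizer attains $\norm{\phi}_\infty$ at $m-1$ interior critical points) and shows the extremal sets of $\phi$, $\omega$ and their average must agree, again forcing $\phi=\omega$. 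A second, smaller gap is in (f): your strict-convexity argument for the symmetrized average only works for $1<q<\infty$. The paper instead symmetrizes the \emph{unique minimizer} $\psi$: its reflection $\omega$ has the same norm, so $(\psi+\omega)/2$ is also a minimizer, and uniqueness from (d) forces $\psi=\omega$, i.e.\ $\psi$ is even or odd about $c$; this covers all $q\in[1,\infty]$ at once and then yields the strict inequality $F_q(\psi)<F_q(\phi)$ for any non-symmetric $\phi$ simply because $\phi$ is not the minimizer. You should reorganize (f) along those lines rather than trying to patch strict convexity at the endpoints.
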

Theorem~\ref{theoremminimize} and its corollaries used only
$\poly{2}$ but the lemmas give
results for $\polym$ for all $m$.  These will be useful for
considering integration schemes generated by polynomials of
degree $m$, which we do not include here.

The results of Lemma~\ref{lemma} are well known and go
back to Chebyshev and Fej\'{e}r. 
To keep the paper self contained we have provided elementary proofs.
For a full exposition and references
to the original literature see, for example,
\cite{cheney}, \cite{davis} and \cite{lorentz}.  
Three cases of the minimizing problem
of Lemma~\ref{lemma} appear in the literature.  When $q=\infty$ the
minimizing polynomial in ${\mathcal P}_2$ is the Chebyshev polynomial
of the first kind $\phi(x)=x^2-1/2=T_2(x)/2$.  When $q=2$ it
is given as a Legendre polynomial $\phi(x)=x^2-1/3=(2/3)P_2(x)$.
When $q=1$ it is  the Chebyshev polynomial
of the  second kind $\phi(x)=x^2-1/4=U_2(x)/4$.  These are for
the interval $[-1,1]$.  A linear change of variables is used for
other intervals.  These are all types of Gegenbauer polynomials, which
are orthogonal on $[-1,1]$ with respect to a certain weight function.  
Gillis and Lewis \cite{gillislewis} give an argument to show that the
minimizing polynomials are most likely not orthogonal polynomials for
other values of $q$.

\begin{proof}
(a) Write $\phi(x)=[(x-r)^2+s^2]\omega(x)$ for some $r,s\in\R$,
$s\not=0$, and $\omega\in\poly{m-2}$.
Let $\psi(x)=(x-r)^2\omega(x)$.  For all $x\in\R$ such that $\omega(x)\not=0$
we have
$\abs{\psi(x)}< \abs{\phi(x)}$.  Hence, $\norm{\psi}_q<
\norm{\phi}_q$.

(b)  If $m=1$ let $\phi(x)=x-t$. Direct calculation shows the unique minimum of 
$\norm{\phi}_q$ occurs at $t=c$.  If $m\geq 2$, 
by (a) we can assume all the roots of $\phi$ are real and
write $\phi(x)=(x-t)\omega(x)$ for some $t\not\in[a,b]$,
and $\omega\in\poly{m-1}$.  Suppose $t<a$.  Let $\psi(x)=(x-a)\omega(x)$.
For all $x\in[a,b]$ such that $\omega(x)\not=0$ we have $\abs{\psi(x)}<\abs{\phi(x)}$.
Hence,
$\norm{\psi}_q<\norm{\phi}_q$.  Similarly if $t>b$.

(c) Consider $\psi(x)=(x-t)^2$ with 
$t\in(a,b)$ and
$\psi_\epsilon(x)=(x-t+\epsilon)(x-t-\epsilon)=\psi(x)
-\epsilon^2$.  For all $x$ we have $\abs{\psi_\epsilon(x)}\leq
\abs{\psi(x)}+\epsilon^2$ and for $x\not\in(t-\epsilon,t+\epsilon)$
we have $\abs{\psi_\epsilon(x)}<
\abs{\psi(x)}$.  This shows that 
$\norm{\psi_\epsilon}_\infty<\norm{\psi}_\infty$ if $\epsilon>0$ is small
enough.  Factoring now shows the zeros of any minimizing polynomial
are simple.  Similarly for
$t=a$ or $b$.  

For $q=1$,
\begin{eqnarray*}
\norm{\psi_\epsilon}_1 & = & \int_{x\in(t-\epsilon,t+\epsilon)}
\left[\epsilon^2-\psi(x)\right]\,dx+
\int_{x\not\in(t-\epsilon,t+\epsilon)}
\left[\psi(x)-\epsilon^2\right]\,dx\\
 & \leq & \norm{\psi}_1+4\epsilon^3-\epsilon^2(b-a)\\
 & < &  \norm{\psi}_1\quad\text{for small enough } \epsilon>0.
\end{eqnarray*}
And, if $g\in L^\infty([a,b])$ such that $\abs{g}>0$ almost
everywhere then $\norm{\psi_\epsilon g}_1<\norm{\psi g}_1$ for small
enough $\epsilon>0$.  A similar construction is used when $t$
equals $a$ or $b$.  Factoring $\psi$ shows the zeros of any
minimizing polynomial must be simple.

For $1<q<\infty$ use the same construction, with Taylor's theorem in 
\eqref{simpletaylor}, to get
\begin{eqnarray}
\norm{\psi_\epsilon}_q^q & = & \int_{x\in(t-\epsilon,t+\epsilon)}
\left[\epsilon^2-\psi(x)\right]^q\,dx+
\int_{x\not\in(t-\epsilon,t+\epsilon)}
\left[\psi(x)-\epsilon^2\right]^q\,dx\notag\\
 & \leq & \int_{x\in(t-\epsilon,t+\epsilon)}\epsilon^{2q}\,dx
+\int_{x\not\in(t-\epsilon,t+\epsilon)}
\psi^q(x)-q\epsilon^2\left[\psi(x)-\epsilon^2\right]^{q-1}\,dx
\label{simpletaylor}\\
 & < &  2\epsilon^{2q+1} +\norm{\psi}_q^q -q\epsilon^2
\int_{x\not\in(t-2\epsilon,t+2\epsilon)}
(3\epsilon^2)^{q-1}\,dx\notag\\
 & = & 2\epsilon^{2q+1} + 
\norm{\psi}_q^q-3^{q-1}q\epsilon^{2q}(b-a-4\epsilon)\notag\\
 & < & \norm{\psi}^q_q\quad\text{for small enough } \epsilon>0.\notag
\end{eqnarray}
As with the $q=1$ case above, it now follows that zeros of minimizing
polynomials must be simple.

(d) Suppose the minimum of $F_q$ occurs at 
both $\phi,\omega\in\polym$.
Let $\psi=(\phi+\omega)/2$.
Then $\psi\in\polym$  so
$\norm{\phi}_q\leq\norm{\psi}_q\leq (\norm{\phi}_q+\norm{\omega}_q)/2=
\norm{\phi}_q$.  The Minkowski inequality must then reduce to equality.
For $1<q<\infty$ this means
$\phi=d\omega$ for some $d>0$ \cite[p.~47]{liebloss}.
Since $\phi,\omega\in \polym$ we must have
$d=1$.
If $q=1$ then there is equality in the Minkowski
inequality if and only if $\phi\omega\geq 0$.  If there is equality, 
then $\phi$ and $\psi$ must share roots of odd multiplicity.  But by (c),
$\phi$ and $\psi$ have $m$ simple zeros in $[a,b]$.  Hence, $\phi=\psi$.
For $q=\infty$, Lemma~\ref{lemma2} shows there are 
$a< x_1<\cdots<x_{m-1}< b$,
$a< y_1<\cdots<y_{m-1}< b$
and
$a< z_1<\cdots<z_{m-1}< b$ such that
$\abs{\phi(x_i)}=\abs{\omega(y_i)}=\abs{\psi(z_i)}=\norm{\phi}_\infty$,
$\phi'(x_i)=\omega'(y_i)=\psi'(z_i)=0$ for each $1\leq i\leq m-1$.
Let $M_\phi=\{x_i\}_{i=1}^{m-1}$, $M_\omega=\{y_i\}_{i=1}^{m-1}$, 
$M_\psi=\{z_i\}_{i=1}^{m-1}$.  Let $z\in M_\psi$ then $\norm{\phi}_\infty
=\abs{\psi(z)}=\abs{\phi(z)+\omega(z)}/2<\norm{\phi}_\infty$ unless $z\in M_\phi
\cap M_\omega$.  Hence, $M_\phi=M_\omega$.  Therefore, $\phi'(x_i)=\psi'(x_i)$
for each $1\leq i\leq m-1$.  And then $\phi=\omega$.

(e) By parts (a) and (b) we need only consider  $\phi\in\polym$ 
with $m$ real roots in $[a,b]$.  Let $t_i\in[a,b]$ for $1\leq i\leq m$
and define $\phi\in\polym$ by $\phi(x)=\prod_{i=1}^m(x-t_i)$.

Suppose $1\leq q<\infty$.  Define 
$G_q(t)=\int_a^b\prod_{i=1}^m\abs{x-t_i}^q\,dx$.   
Then $G_q\fn[a,b]^m\to\R$
attains its minimum over $[a,b]^m$ if and only if $F_q$ attains its 
minimum over $\polym$.
The set $[a,b]^m$ is compact in $\R^m$.  And, $G_q$ is continuous.  For,
suppose $t\in[a,b]^m$ and $s^{(k)}\in[a,b]^m$ for each $k\in\N$
such that $s^{(k)}\to t$ in the Euclidean norm.  We have
$$\int_a^b\prod_{i=1}^m\abs{x-s^{(k)}_i}^q\,dx\leq
\int_a^b\prod_{i=1}^m(b-a)^{q}\,dx=(b-a)^{mq+1}.
$$
By dominated convergence (for example, \cite[Theorem~7.2]{bartleelements}), 
$\lim_{k\to\infty}G_q(s^{(k)})=G_q(t)$
and $G_q$ is continuous.  Therefore, $G_q$ attains its minimum over $[a,b]^m$.

The case $q=\infty$ is similar, using $F_\infty(\phi)=\max_{x\in[a,b]}\prod
_{i=1}^m
\abs{x-t_i}$.

(f) Without loss of generality, $b=-a$.  Suppose $\psi\in\polym$ is the 
unique minimizer of $F_q$.  Let $\omega(x)=\psi(-x)$ if
$m$ is even and $\omega(x)=-\psi(-x)$ if $m$ is odd.  Then
$\omega\in\polym$ and $\norm{\omega}_q=
\norm{\psi}_q$.  Let $\zeta=(\psi+\omega)/2$.  Then $\zeta\in\polym$ and
is even if $m$ is even, odd if $m$ is odd.  Also,
$\norm{\zeta}_q\leq (\norm{\psi}_q+\norm{\omega}_q)/2=\norm{\psi}_q$.
Hence, $\zeta=\psi=\omega$.  But then $\psi$ is even or odd
as $m$ is even or odd.  If $\phi\in\polym$ is neither even nor odd
then $F_q(\psi)<F_q(\phi)$.
\end{proof}
\begin{lemma}\label{lemma2}
Suppose $\phi\in\polym$ is a minimum of $F_\infty$.  Then
$\phi(x)=\prod_{i=1}^m(x-t_i)$ for $a< t_1<t_2
<\cdots<t_m< b$.  For each $1\leq i\leq m-1$
there is
$\xi_i\in(t_i,t_{i+1})$ such that 
$\abs{\phi(\xi_i)}=\norm{\phi}_\infty$.
\end{lemma}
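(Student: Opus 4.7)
The plan is to combine the structural information already proved in Lemma~\ref{lemma} with a Chebyshev-style perturbation argument. First I would invoke Lemma~\ref{lemma}(c) to obtain that any minimizer $\phi\in\polym$ has $m$ simple real zeros $t_1\leq\cdots\leq t_m$ in $[a,b]$; then Rolle's theorem on $[t_i,t_{i+1}]$ produces a critical point $\xi_i\in(t_i,t_{i+1})$, and since $\deg\phi'=m-1$ the points $\xi_1<\cdots<\xi_{m-1}$ exhaust all critical points of $\phi$. Because $\phi$ does not change sign on $[t_i,t_{i+1}]$, the restriction of $\abs{\phi}$ to this subinterval attains its maximum uniquely at $\xi_i$. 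It follows that $\abs{\phi}$ can attain its supremum $M:=\norm{\phi}_\infty$ over $[a,b]$ only at the $m+1$ candidate points $a,\xi_1,\ldots,\xi_{m-1},b$; let $N$ denote the subset at which $M$ is actually achieved.

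Next I would show $|N|=m+1$ by contradiction. Assuming $|N|\leq m$, Lagrange interpolation yields a polynomial $r$ with $\deg r\leq m-1$ satisfying $r(x)=\phi(x)$ for every $x\in N$; set $\psi:=\phi-\epsilon r$, which is still monic of degree $m$ and hence lies in $\polym$. At each $x\in N$ the identity $\abs{\psi(x)}=M(1-\epsilon)$ gives $\abs{\psi(x)}<M$, and by continuity this extends to an open neighborhood $U$ of $N$ for all small $\epsilon>0$. On the complement $[a,b]\setminus U$, compactness provides $\delta>0$ with $\abs{\phi}\leq M-\delta$, so $\abs{\psi}\leq M-\delta+\epsilon\norm{r}_\infty<M$ once $\epsilon$ is small. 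Hence $\norm{\psi}_\infty<M$, contradicting the minimality of $\phi$.

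From $N=\{a,\xi_1,\ldots,\xi_{m-1},b\}$ the second claim of the lemma is immediate, namely $\abs{\phi(\xi_i)}=M$ for each $1\leq i\leq m-1$. In addition $\abs{\phi(a)}=\abs{\phi(b)}=M>0$, so neither $a$ nor $b$ is a root of $\phi$, which forces $a<t_1<\cdots<t_m<b$ and establishes the first claim.

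The main obstacle will be the uniform bound $\norm{\psi}_\infty<M$: one must control $\abs{\psi}$ both on a small neighborhood of $N$ (using only continuity, since at points of $N$ the perturbation reduces $\abs{\phi}$ by the factor $1-\epsilon$) and on its complement (using compactness of $[a,b]$), then pick $\epsilon$ small enough for both estimates simultaneously. The Lagrange interpolation step itself is routine, since the $|N|\leq m$ prescribed values always fit within the $m$-dimensional space of polynomials of degree at most $m-1$, and the assumption $|N|=m+1$ would exceed these degrees of freedom, which is precisely why the argument terminates at the correct count.
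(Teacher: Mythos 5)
Your proof is correct in substance, but it takes a genuinely different (and much more complete) route than the paper. The paper's entire proof of this lemma is the single sentence ``This follows from the $q=\infty$ case of Lemma~4.1(c)'' --- that is, it leans on the perturbation $\psi_\epsilon=\psi-\epsilon^2$ used to show a minimizer of $F_\infty$ has $m$ simple zeros in $[a,b]$, and leaves the reader to extract from that both the strict interiority $a<t_1$, $t_m<b$ and the equioscillation property $\abs{\phi(\xi_i)}=\norm{\phi}_\infty$. You instead run the classical Chebyshev argument: take the simple zeros from Lemma~4.1(c), locate the $m-1$ critical points by Rolle, observe the sup can only be attained on the candidate set $\{a,\xi_1,\dots,\xi_{m-1},b\}$, and rule out attainment at fewer than $m+1$ of these points by subtracting $\epsilon$ times the Lagrange interpolant of $\phi$ on $N$ (degree $\le m-1$, so monicity is preserved) to strictly lower the norm. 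This actually supplies the content the paper's citation glosses over, and it delivers both conclusions of the lemma in one stroke, since $\abs{\phi(a)}=\abs{\phi(b)}=\norm{\phi}_\infty>0$ forces the roots off the endpoints. The one place your sketch needs tightening is the quantifier order in the perturbation step: as written, the neighborhood $U$ of $N$ is produced ``by continuity'' after $\epsilon$ is in play, and then $\delta$ on $[a,b]\setminus U$ would depend on $\epsilon$, which is circular. The standard repair is to fix $U$ first, independent of $\epsilon$, as a neighborhood of $N$ on which $\phi$ and $r$ have the same sign and $\phi\neq 0$ (possible since $r=\phi$ and $\abs{\phi}=\norm{\phi}_\infty>0$ on $N$); then $\abs{\phi-\epsilon r}<\norm{\phi}_\infty$ on $U$ for all small $\epsilon$, and the compactness bound $\abs{\phi}\le\norm{\phi}_\infty-\delta$ on $[a,b]\setminus U$ is available with $\delta$ independent of $\epsilon$. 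You explicitly flag this as the delicate point, and the fix is routine, so I regard the argument as sound.
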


\begin{proof}
This follows from the $q=\infty$ case of Lemma~\ref{lemma}(c).
\end{proof}

\section{$f''$ Henstock--Kurzweil integrable}\label{sectionhk}
Let $H\!K([a,b])$ be the set of functions integrable in the
Henstock--Kurzweil sense on $[a,b]$.  See, for example, \cite{lee}.
Note that $L^s([a,b])\subsetneq L^r([a,b])\subsetneq H\!K([a,b])$ for
all $1\leq r<s\leq \infty$.  An example of a function
$f\in H\!K([0,1])$ that is not in any $L^p([0,1])$ space is $f=F'$
where $F(x)=x^2\sin(x^{-3})$ for $x\in(0,1]$ and $F(x)=0$.
In this section we use the method
of Theorem~\ref{theoremminimize} to choose a monic quadratic $\phi$
to minimize the error in the resulting corrected trapezoidal rule
when $f''\in H\!K([a,b])$.  We also consider  the case when $f'$ is
merely continuous and then $f''$ exists as a distribution.  For
all of these cases, it turns out that amongst corrected trapezoidal
rules \eqref{basic}, the trapezoidal rule itself minimizes the error.

If $f\in H\!K([a,b])$ then the
Alexiewicz norm of $f$ is
$\norm{f}=\sup_{x\in[a,b]}\abs{\int_a^x f(t)\,dt}$.  If $g\fn[a,b]\to\R$
is of bounded variation then $fg\in H\!K([a,b])$.  Integration by
parts shows that
$\abs{\intab f(x)g(x)\,dx}\leq \abs{\intab f}\abs{g(b)}+\norm{f}Vg$,
where $Vg$ is the variation of $g$.  This is a version of an
inequality known in the literature as the
Darst--Pollard--Beesack inequality.  See
\cite{darstpollard}, \cite{beesack}. However, it appears earlier in
\cite{rieszlivingston}.
It is proved for a more symmetric version of the Alexiewicz norm
for Henstock--Kurzweil integrals in
\cite[Lemma~24]{talvilafouriertransform}. 

Under the
Alexiewicz norm, $H\!K([a,b])$ is a normed linear space but is not
complete.  We will discuss integration in the completion later
in this section.

\begin{theorem}\label{theoremhk}
Let $c$ be the midpoint of $[a,b]$.
Let $f\fn[a,b]\to\R$ such that $f'\in C([a,b])$
and $f''\in H\!K([a,b])$.
Amongst all monic quadratic polynomials used to generate \eqref{basic},
taking $\phi(x)=(x-c)^2-(b-a)^2/4=(x-a)(x-b)$ minimizes the error $\abs{E(f)}$.
This gives the
quadrature formula
\begin{equation}
\intab f(x)\,dx=\frac{b-a}{2}\left[f(a)+f(b)\right]
+E(f),\label{optimalhk}
\end{equation}
where
\begin{equation}
\abs{E(f)}  \leq 
\norm{f''}(b-a)^{2}/4.
\label{optimalerrorhk}
\end{equation}
For a uniform partition given by $x_i=a+(b-a)i/n$, $0\leq i\leq n$,
the composite formula is the trapezoidal rule
\begin{equation}
\intab f(x)\,dx=\frac{b-a}{2n}\left[f(a)+2\sum_{i=1}^{n-1} f(x_i)+f(b)\right] +
E(f),\label{compositehk}
\end{equation}
where
$$
\abs{E(f)}  \leq
\frac{\norm{f''}(b-a)^{2}}{4n}.
$$
The coefficient of $\norm{f''}$ in the error bounds is the best 
possible.
\end{theorem}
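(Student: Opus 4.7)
The plan is to start from the identity $E(f)=\tfrac12\int_a^b f''(x)\phi(x)\,dx$ in \eqref{basic} and apply the Darst--Pollard--Beesack inequality quoted just before the theorem: since $\phi$ is of bounded variation and $f''\in H\!K([a,b])$,
\[
|E(f)|\leq \tfrac{1}{2}\left(\left|\int_a^b f''\right|\,|\phi(b)|+\norm{f''}V\phi\right) \leq \tfrac{\norm{f''}}{2}\bigl(|\phi(b)|+V\phi\bigr),
\]
using $\left|\int_a^b f''\right|\leq \norm{f''}$ for the second inequality. It then suffices to minimize $|\phi(b)|+V\phi$ over all monic quadratics.

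Writing a general monic quadratic as $\phi(x)=(x-r)^2+s$, a direct calculation gives $V\phi=(r-a)^2+(b-r)^2$ when $r\in[a,b]$ and $V\phi=(b-a)|a+b-2r|$ when $r\notin[a,b]$; in both cases $V\phi\geq(b-a)^2/2$, with equality iff $r=c:=(a+b)/2$. Fixing $r=c$, the remaining freedom in $s$ enters only through $|\phi(b)|=|((b-a)/2)^2+s|$, which vanishes uniquely at $s=-((b-a)/2)^2$; this choice gives $\phi(x)=(x-a)(x-b)$. Substituting into \eqref{basic} produces the trapezoidal rule \eqref{optimalhk} with error bound $\norm{f''}(b-a)^2/4$, i.e.~\eqref{optimalerrorhk}.

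The main obstacle is sharpness of the constant $(b-a)^2/4$. Since $\phi(a)=\phi(b)=0$, integration by parts gives $\int_a^b f''(x)\phi(x)\,dx=-\int_a^b F''(x)(2x-a-b)\,dx$ where $F''(x)=\int_a^x f''$; the bound $\norm{f''}(b-a)^2/2$ on this integral would be saturated by $F''(x)=\norm{f''}\,\mathrm{sgn}(a+b-2x)$, but this is incompatible with $F''(a)=0$ and continuity of $F''$. Following the delta sequence construction in the proof of Corollary~\ref{corollary1}, I would build continuous $F_m''$ with $\|F_m''\|_\infty\to 1$ that transition rapidly from $0$ to $+1$ near $a$, then from $+1$ to $-1$ near $c$, and stay within $[-1,1]$ elsewhere; defining $f_m$ by double integration yields $\norm{f_m''}\to 1$ and $|E(f_m)|\to(b-a)^2/4$. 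For the composite formula, summing the single-interval identity gives $E(f)=\tfrac12\int_a^b f''(x)\phi(x)\,dx$ where $\phi$ equals $(x-x_{i-1})(x-x_i)$ on each $[x_{i-1},x_i]$; this piecewise quadratic is continuous (vanishing at every node, including $a$ and $b$) with $V\phi=n\cdot h^2/2=(b-a)^2/(2n)$, so a single DPB application on $[a,b]$ gives the stated bound, and sharpness is proved by an analogous construction with transitions near every node and each midpoint.
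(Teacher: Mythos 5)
Your proposal is correct and follows essentially the same route as the paper: the Darst--Pollard--Beesack inequality reduces the problem to minimizing $V\phi$ subject to $\phi(b)=0$, yielding $\phi(x)=(x-a)(x-b)$ and the bound $\norm{f''}(b-a)^2/4$, and your sharpness construction (continuous primitives tending to $\mathrm{sgn}(a+b-2x)$) is the primitive-side version of the paper's delta sequence $f_n''=\psi_n(x-a)-2\psi_n(x-c)+\psi_n(b-x)$. Your composite argument via the piecewise quadratic with $V\phi=(b-a)^2/(2n)$ is exactly what the paper's remark on additivity of the variation intends.
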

\begin{proof}
From \eqref{basic} we have $\abs{E(f)}\leq (\abs{\intab f''(x)\,dx}\abs{\phi(b)}+
\norm{f''}V\phi)/2$.
Write $\phi(x)=(x-r)^2+s$.  Then $V\phi=2\int_a^b\abs{x-r}\,dx$.
This is minimized when $r=c$. 
Note that $\phi(b)=0$ if and only if $s=-(b-a)^2/4$.
The unique error-minimizing polynomial is then $\phi(x)=(x-c)^2-(b-a)^2/4
=(x-a)(x-b)$,
for which $V\phi=(b-a)^2/2$.  Hence, $\abs{E(f)}\leq \norm{f''}(b-a)^2/4$.
To show this estimate cannot be improved, let $(\psi_n)$ be a
delta sequence as in the proof of Corollary~\ref{corollary1}.
Define $f_n''(x)= \psi_n(x-a)-2\psi_n(x-c)+\psi_n(b-x)$.  Then
$\intab f_n''(x)\,dx=0$ and $\norm{f_n''}=1$.  Integrate to get
$f_n(x)=\int_a^x(x-t)f_n''(t)\,dt$, modulo a linear function.
Since $\phi$ is continuous, $\lim_{n\to\infty}\abs{\intab f_n''(x)\phi(x)\,dx}
=\abs{\phi(a)-2\phi(c)+\phi(b)}=(b-a)^2/2$.
\end{proof}

In the terminology of Theorem~\eqref{theoremminimize} the minimizing
polynomial has $\beta=1$ and $\alpha =(b-a)/2$ and
is the same as the one that generated the
trapezoidal rule in \eqref{trapp}.
Notice that the variation is additive over disjoint intervals so
the error in the composite formula is order $1/n$, compared with
error of order $1/n^2$ in 
Corollary~\ref{corollarycomposite} when
$f''\in L^p([a,b])$.

An equivalent norm is $\norm{f}^\ast=\sup_I\abs{\int_If(x)\,dx}$ where the
supremum is taken over all intervals $I\subset[a,b]$.  It is easy to
see that $\norm{f}\leq\norm{f}^\ast\leq 2\norm{f}$ for all $f\in H\!K([a,b])$. 
In terms of this norm, 
\eqref{optimalerrorhk} implies
$\abs{E(f)}  \leq 
\norm{f''}^\ast(b-a)^{2}/4$.  This inequality appears as Theorem~21
in \cite{dingyewang}.
We improve this inequality by a factor of $1/2$ and show our result is sharp.
\begin{corollary}
With the assumptions of Theorem~\ref{theoremhk} we have
$\abs{E(f)}\leq (b-a)^2\norm{f''}^\ast/8$.  The constant $1/8$ is the
best possible.
\end{corollary}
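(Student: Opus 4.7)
The plan is to keep the same minimizing polynomial $\phi(x)=(x-a)(x-b)$ from Theorem~\ref{theoremhk} and simply replace the Darst--Pollard--Beesack estimate by a sharper one tailored to the symmetric norm $\norm{\cdot}^\ast$. The key observation is that $\phi(a)=\phi(b)=0$, which allows the primitive of $f''$ to be shifted by an arbitrary constant inside the relevant integral. The saving of a factor of $2$ then comes from optimizing that constant so that the supremum of the shifted primitive equals half the oscillation of $F$, namely $\norm{f''}^\ast/2$ rather than $\norm{f''}$.

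Concretely, I would write $\abs{E(f)}=\tfrac{1}{2}\abs{\intab f''(x)\phi(x)\,dx}$ and apply integration by parts (valid for an HK--integrable function against a $C^1$ function of bounded variation):
\begin{equation*}
\intab f''(x)\phi(x)\,dx = \bigl[F(x)\phi(x)\bigr]_a^b - \intab F(x)\phi'(x)\,dx = -\intab F(x)\phi'(x)\,dx,
\end{equation*}
where $F(x)=\int_a^x f''(t)\,dt$ and the boundary term vanishes since $\phi(a)=\phi(b)=0$. Because $\intab \phi'(x)\,dx=\phi(b)-\phi(a)=0$, the right--hand integral equals $\intab (F(x)-C)\phi'(x)\,dx$ for any constant $C$. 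Choosing $C=(\max_{[a,b]}F+\min_{[a,b]}F)/2$ gives
\begin{equation*}
\sup_{x\in[a,b]}\abs{F(x)-C}=\tfrac{1}{2}\bigl(\max F-\min F\bigr)=\tfrac{1}{2}\norm{f''}^\ast,
\end{equation*}
the last equality using that $F$ is continuous and that $\norm{f''}^\ast=\sup_{\alpha\leq\beta}\abs{F(\beta)-F(\alpha)}$. Combined with $\intab\abs{\phi'(x)}\,dx=V\phi=(b-a)^2/2$, this yields $\abs{\intab f''\phi\,dx}\leq (b-a)^2\norm{f''}^\ast/4$ and hence $\abs{E(f)}\leq (b-a)^2\norm{f''}^\ast/8$.

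For sharpness I would recycle the sequence from the proof of Theorem~\ref{theoremhk}, namely $f_n''(x)=\psi_n(x-a)-2\psi_n(x-c)+\psi_n(b-x)$ with $(\psi_n)$ a delta sequence. A short computation shows that $F_n(x)=\int_a^x f_n''$ climbs from $0$ to $1$ near $a$, drops from $1$ to $-1$ near $c$, and returns from $-1$ to $0$ near $b$, so $\norm{f_n''}^\ast=\max F_n-\min F_n=2$. Meanwhile, continuity of $\phi$ gives $\abs{\intab f_n''(x)\phi(x)\,dx}\to \abs{\phi(a)-2\phi(c)+\phi(b)}=(b-a)^2/2$, so $\abs{E(f_n)}/\norm{f_n''}^\ast\to (b-a)^2/8$.

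The mildly delicate step is the constant-shift identity in the HK setting, but since $\phi'$ is continuous and $F$ is continuous (an HK-primitive is $ACG^*$, hence continuous), the integral $\intab F\phi'\,dx$ is an ordinary Lebesgue integral and the sup-type estimate is immediate; the identification $\norm{f''}^\ast=\operatorname{osc}_{[a,b]} F$ is likewise clean because $F$ attains its extrema on the compact interval. Everything else is routine bookkeeping.
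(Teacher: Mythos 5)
Your proof is correct, but it takes a genuinely different route from the paper. The paper bounds $E(f)$ via the second mean value theorem for integrals: it splits $[a,b]$ at the vertex $r$ of $\phi$ into two monotone pieces, obtains $\abs{E(f)}\leq(\abs{\phi(a)}+\abs{\phi(r)}+\abs{\phi(b)})\norm{f''}^\ast/2$ for a general monic quadratic, and then observes that $\phi(x)=(x-a)(x-b)$ minimizes this coefficient, yielding $(b-a)^2/8$. You instead fix $\phi(x)=(x-a)(x-b)$ from the start, integrate by parts (the boundary terms vanish since $\phi(a)=\phi(b)=0$), exploit $\intab\phi'=0$ to recenter the primitive $F$ at $C=(\max F+\min F)/2$, and use $\norm{f''}^\ast=\operatorname{osc}F$ together with $V\phi=(b-a)^2/2$. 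Both arguments are sound; each step of yours checks out, including the identification of $\norm{f''}^\ast$ with the oscillation of $F$ and the limit computation in the sharpness example (your $f_n''$ gives $\abs{E(f_n)}\to(b-a)^2/4$ against $\norm{f_n''}^\ast=2$, so the ratio tends to $(b-a)^2/8$ as required). What the paper's route buys is a small optimality statement not demanded by the corollary, namely that $(x-a)(x-b)$ is the best monic quadratic for this style of estimate; what your route buys is a more elementary and self-contained argument that avoids the second mean value theorem and makes transparent exactly where the factor-of-two improvement over the Darst--Pollard--Beesack bound comes from (centering the primitive). The paper's sharpness witness, a single delta sequence $f_n''(x)=\psi_n(x-c)$ concentrated at the midpoint with $\norm{f_n''}^\ast=1$, is marginally simpler than recycling the three-bump sequence from Theorem~\ref{theoremhk}, but yours works equally well.
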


\begin{proof}
Use the second mean value theorem for integrals \cite{celidze}.  If
$\phi$ is monotonic on $[a,b]$ then there is $\xi\in[a,b]$ such that
$$
E(f)  =  \frac{1}{2}\intab f''(x)\phi(x)\,dx
  =  \frac{\phi(a)}{2}\int_a^\xi f''(x)\,dx +
\frac{\phi(b)}{2}\int_\xi^b f''(x)\,dx.
$$
Then $\abs{E(f)}\leq (\abs{\phi(a)}+\abs{\phi(b)})\norm{f''}^\ast/2$.
This is minimized by taking $\phi(x)=(x-a)^2$ or
$\phi(x)=(x-b)^2$, for which $\abs{E(f)}\leq (b-a)^2\norm{f''}^\ast/2$.  If
$\phi$ has a minimum at  $r\in(a,b)$ then there are $\xi_1\in[a,r]$ and
$\xi_2\in[r,b]$ such that
\begin{eqnarray*}
E(f) & = & \frac{\phi(a)}{2}\int_a^{\xi_1} f''(x)\,dx +
\frac{\phi(r)}{2}\int_{\xi_1}^r f''(x)\,dx
+\frac{\phi(r)}{2}\int_r^{\xi_2} f''(x)\,dx
+\frac{\phi(b)}{2}\int_{\xi_2}^b f''(x)\,dx.
\end{eqnarray*}
It follows that $\abs{E(f)}\leq (\abs{\phi(a)}+\abs{\phi(r)}
+\abs{\phi(b)})\norm{f''}^\ast/2$.  Choosing $\phi(x)=(x-a)(x-b)$ minimizes
the coefficient of $\norm{f''}^\ast$ and we get
$\abs{E(f)}\leq (b-a)^2\norm{f''}^\ast/8$.
To prove this estimate is sharp, let $(\psi_n)$ be a
delta sequence as in the proof of Corollary~\ref{corollary1}.
Define $f_n''(x)= \psi_n(x-c)$.  Then $\norm{f_n''}^\ast=1$ and
$\abs{E(\psi_n)}\to (c-a)(b-c)/2=(b-a)^2/8$.
\end{proof}

The completion of $H\!K([a,b])$ in the Alexiewicz norm is the Banach space
$\alexc$. Each element of $\alexc$ is the distributional derivative
of a function in $\balexc=\{F\in C([a,b])\mid F(a)=0\}$.  Note that
$\balexc$ is a Banach space under usual pointwise operations and the
uniform norm.  If $f\in\alexc$ then there is a unique primitive $F\in\balexc$
such that $F'=f$.  The distributional derivative is $\langle F',\phi\rangle
=-\langle F,\phi'\rangle=-\intab F(x)\phi'(x)\,dx$ where 
$\phi\in C_c^\infty(\R)$.  The
Alexiewicz norm of $f$ is $\norm{f}=\sup_{x\in[a,b]}\abs{\int_a^x f}=
\norm{F}_\infty$.  This makes
$\alexc$ into a Banach space isometrically isomorphic to $\balexc$.
The {\it continuous primitive integral} of $f\in\alexc$ is then
$\intab f=F(b)-F(a)$.  If $g$ is of bounded variation then the
integration by parts formula is given in terms of a Riemann--Stieltjes
integral as $\intab fg=F(b)g(b)-\intab F(x)\,dg(x)$.  Note that
$\alexc$ contains $H\!K([a,b])$ and hence $L^p([a,b])$ for each 
$1\leq p\leq\infty$.  If $F$ is a continuous function such that
its pointwise derivative $F'(x)=0$ almost everywhere then the Lebesgue
integral of $F'(x)$ exists and is $0$ but the continuous primitive
integral is $\intab F'=F(b)-F(a)$.  If $F$ is continuous such that the pointwise
derivative exists nowhere then the Lebesgue integral of $F'(x)$ does
not exist but $F'\in\alexc$ and $\intab F'=F(b)-F(a)$.   
See \cite{talviladenjoy}
for details.  Note that  if $F\in C([a,b])$ then $F'\in\alexc$ and
$\intab F'=F(b)-F(a)$.  An advantage of the continuous primitive integral
is that the space of primitives is simple.  It is
the continuous functions while for the Henstock--Kurzweil integral it is
a complicated space called $ACG^\ast$.  See \cite{celidze} for the definition.
\begin{corollary}\label{corollaryalexc}
Let $f\in C^1([a,b])$.  Then $f''\in\alexc$ and the formulas in
Theorem~\ref{theoremhk}
hold.
\end{corollary}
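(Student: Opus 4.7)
The plan is to rerun the argument of Theorem~\ref{theoremhk} verbatim, after verifying that all the ingredients still make sense once $f''$ is only known to lie in $\alexc$. The hypothesis $f\in C^1([a,b])$ means that $f'$ is continuous on $[a,b]$, and since every continuous function on $[a,b]$ is the primitive of its own distributional derivative in $\alexc$, it follows automatically that $f''\in\alexc$ with primitive $f'$. Moreover the Alexiewicz norm is well defined: $\norm{f''}=\sup_{x\in[a,b]}\abs{f'(x)-f'(a)}$.

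Next I would reproduce identity \eqref{basic} in this new setting. For a monic quadratic $\phi$, the integration by parts formula for the continuous primitive integral, $\intab f''\phi = f'(b)\phi(b)-f'(a)\phi(a)-\intab f'(x)\,d\phi(x)$, converts $\intab f''\phi$ into the classical integral $\intab f'(x)\phi'(x)\,dx$ (since $\phi'$ is absolutely continuous, $d\phi(x)=\phi'(x)\,dx$). A second, classical, integration by parts on $\intab f'\phi'$, using $\phi''\equiv 2$, produces exactly the boundary terms of \eqref{basic} and the term $\intab f\,dx$. So \eqref{basic} still holds with $E(f)=(1/2)\intab f''\phi$ now interpreted as a continuous primitive integral.

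To bound $E(f)$, I would again invoke integration by parts, written with the normalized primitive $F(x)=\int_a^xf''=f'(x)-f'(a)$, to get
\begin{equation*}
\intab f''\phi = F(b)\phi(b)-\intab F(x)\,d\phi(x),
\end{equation*}
whence
\begin{equation*}
\Bigl|\intab f''\phi\Bigr|\le \Bigl|\intab f''\Bigr|\abs{\phi(b)}+\norm{f''}V\phi,
\end{equation*}
the Darst--Pollard--Beesack inequality, which was only stated in the excerpt for $f''\in H\!K([a,b])$ but whose proof via integration by parts goes through unchanged in $\alexc$. With this inequality in hand, the optimization performed in the proof of Theorem~\ref{theoremhk} is entirely a statement about the monic quadratic $\phi$ and is independent of how $f''$ is integrated; it again singles out $\phi(x)=(x-a)(x-b)$, yielding \eqref{optimalhk}--\eqref{compositehk}.

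The only genuinely new point is sharpness, but the construction in Theorem~\ref{theoremhk} still applies: the delta sequences $\psi_n$ used there are in $L^1\subset H\!K\subset\alexc$, and the resulting $f_n\in C^1([a,b])$ already satisfies the hypothesis of the corollary, so the same limit computation shows the constants $(b-a)^2/4$ and $(b-a)^2/(4n)$ cannot be reduced. I expect the main (mild) obstacle to be the first step, namely verifying cleanly that the two different integration-by-parts formulas—the continuous primitive one for $\intab f''\phi$ and the classical one for $\intab f'\phi'$—combine to reproduce \eqref{basic}; everything else then descends straight from Theorem~\ref{theoremhk}.
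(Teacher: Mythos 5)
Your proposal is correct and matches the paper's intent: the paper gives no explicit proof of this corollary, relying on the preceding discussion that every $F\in C([a,b])$ has $F'\in\alexc$ with $\norm{F'}=\sup_x\abs{F(x)-F(a)}$, and that the integration-by-parts formula (hence the Darst--Pollard--Beesack bound used in Theorem~\ref{theoremhk}) extends to $\alexc$ for $\phi$ of bounded variation. Your write-up simply fills in those details, including the correct observation that the sharpness example from Theorem~\ref{theoremhk} already lies in $C^1([a,b])$, so nothing new is needed.
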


In a sense this now reduces to estimates on $f'$ since the Alexiewicz
norm of $f''$ is the uniform norm of $f'$.
The formulas in Theorem~\ref{theoremhk} also hold when $f'$ is a
regulated function.  This is a function that has a left limit and
a right limit at each point.  See \cite{talvilaregulated} for details.

\section{Exact for cubics}\label{sectionexact}
In this section we show \eqref{basic} is exact for all $\phi$ 
when $f$ is a linear
function.  We also show \eqref{basic} is exact 
for all cubic polynomials $f$ whenever $\phi(x)=(x-c)^2-(b-a)^2/12$.
\begin{theorem}
Let $f\fn[a,b]\to\R$ and let $c$ be the midpoint of $[a,b]$. 
Let $\phi$ be a monic quadratic.  Write
\begin{equation}
\intab f(x)\,dx=\frac{1}{2}\left[-f(a)\phi'(a)+f(b)\phi'(b)+
f'(a)\phi(a)-f'(b)\phi(b)\right]
+E(f).\label{exact}
\end{equation}
(a) If $f$ is a linear function then $E(f)=0$ for all such $\phi$.
(b) $E(f)=0$ for all cubic polynomials $f$ if and only if
$\phi(x)=(x-c)^2-(b-a)^2/12$.
\end{theorem}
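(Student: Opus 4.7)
The plan is to leverage the identity $E(f)=\frac{1}{2}\intab f''(x)\phi(x)\,dx$, which comes from equation \eqref{basic} by integration by parts and which is valid term-by-term for any polynomial $f$. Once $E(f)$ is rewritten in this form, the whole statement reduces to a moment/orthogonality calculation.

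For part (a), if $f$ is linear then $f''\equiv 0$, so $E(f)=\frac{1}{2}\intab 0\cdot\phi(x)\,dx=0$ regardless of the choice of $\phi$. This disposes of (a) immediately.

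For part (b), the key observation is that as $f$ ranges over cubic polynomials, $f''$ ranges over \emph{all} polynomials of degree at most one. Therefore the requirement $E(f)=0$ for every cubic $f$ is equivalent to
\[
\intab \phi(x)\,dx=0\qquad\text{and}\qquad\intab x\phi(x)\,dx=0,
\]
i.e., $\phi$ is $L^2([a,b])$--orthogonal to both $1$ and $x$. I would then parametrize a general monic quadratic by Taylor expansion about the midpoint $c$ as $\phi(x)=(x-c)^2+\beta(x-c)+\gamma$, set $h=(b-a)/2$, and substitute $u=x-c$ so the two integrals run over the symmetric interval $[-h,h]$. By oddness, $\int_{-h}^{h}u\,du=\int_{-h}^{h}u^3\,du=0$, so the first condition collapses to $\tfrac{2}{3}h^3+2h\gamma=0$, forcing $\gamma=-h^2/3=-(b-a)^2/12$, and the second collapses to $\tfrac{2}{3}\beta h^3=0$, forcing $\beta=0$. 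This yields exactly $\phi(x)=(x-c)^2-(b-a)^2/12$; conversely, plugging this $\phi$ back in shows both orthogonality conditions hold, so $\intab f''(x)\phi(x)\,dx=0$ for every linear $f''$, giving $E(f)=0$ for all cubic $f$.

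There is no substantive obstacle here; the one small thing to be careful about is framing the ``if and only if'' cleanly by noting that the map $f\mapsto f''$ is surjective from cubics onto degree-$\le 1$ polynomials, so no cubic test functions are wasted and no linear functional of $\phi$ beyond the two moments above is involved. Alternatively, one can simply recognize $\phi$ as the (shifted, monic) Legendre polynomial $P_2$ on $[a,b]$, which is by definition the unique monic quadratic orthogonal to all lower-degree polynomials on that interval.
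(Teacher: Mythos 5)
Your proof is correct and follows essentially the same route as the paper: both reduce $E(f)=0$ for all cubics to the two moment conditions $\intab\phi=0$ and $\intab x\phi(x)\,dx=0$ and solve for the coefficients of the monic quadratic. The only difference is cosmetic --- the paper writes $\phi(x)=x^2+Cx+D$ and solves a coupled $2\times 2$ linear system, whereas your midpoint-centered parametrization $\phi(x)=(x-c)^2+\beta(x-c)+\gamma$ decouples the two conditions by symmetry, which is a tidier way to do the same computation.
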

\begin{proof}
The proof of (a) is straightforward using 
$E(f)=(1/2)\intab f''(x)\phi(x)\,dx$.  
By (a) we need only consider $f(x)=Ax^3+Bx^2$.  Write $\phi(x)=x^2+Cx+D$.
The equation $\intab(6Ax+2B)(x^2+Cx+D)\,dx=0$ gives the linear system
\begin{eqnarray}
2(a^2+ab+b^2)C+3(a+b)D & = & -\frac{3}{2}(a+b)(a^2+b^2)\\
(a+b)C +2D & = & -\frac{2}{3}(a^2+ab+b^2).\label{cubic2}
\end{eqnarray}
The solution is $C=-(a+b)$, $D=(a^2+4ab+b^2)/6$, and this gives
$\phi(x)=(x-c)^2-(b-a)^2/12$.
\end{proof}
Note that this is the same $\phi$ as in Corollary~\ref{corollary2}.
Also, $E(f)=0$ for all quadratic polynomials $f$ if and only if \eqref{cubic2}
holds.

\end{document}